\documentclass[11pt]{article}

\usepackage[a4paper,margin=2.5cm]{geometry}
\usepackage[utf8]{inputenc}
\usepackage[T1]{fontenc}
\usepackage{lmodern}
\usepackage{amsmath,amssymb,amsthm,mathtools}
\usepackage{enumitem}
\usepackage{xcolor}
\usepackage{hyperref}

\hypersetup{colorlinks=true,linkcolor=blue,citecolor=blue,urlcolor=blue}

\newcommand{\F}{\mathbb F}

\newcommand{\1}{\mathbf 1}
\newcommand{\eps}{\varepsilon}
\newcommand{\wh}{\widehat}
\newcommand{\ol}{\overline}

\newcommand{\norm}[1]{\left\|#1\right\|}

\newcommand{\Prob}{\mathcal P}
\newcommand{\sgn}{\operatorname{sgn}}
\newcommand{\Tr}{\operatorname{Tr}}

\newtheorem{theorem}{Theorem}[section]
\newtheorem{proposition}[theorem]{Proposition}
\newtheorem{lemma}[theorem]{Lemma}
\newtheorem{corollary}[theorem]{Corollary}

\newtheorem{conjecture}[theorem]{Conjecture}
\newtheorem{claim}[theorem]{Claim}
\newtheorem{remark}[theorem]{Remark}

\title{Erd\H{o}s--Falconer distance conjecture from an analytic perspective}

\author{Le Quang Ham \and Dung The Tran}

\date{}

\begin{document}
\maketitle

\begin{abstract}
Let \(q\) be an odd prime power and let \(V=\F_q^{2m}\), equipped with
\(Q(x)=x_1^2+\cdots+x_{2m}^2\). We develop a semidefinite Delsarte
framework for the two-set Erd\H{o}s--Falconer distance problem over \(V\).
The framework reduces the natural \(q^m\)-scale positive-proportion
theorem to a uniform \(L^1\) anti-concentration statement for positive
convex combinations of classical Kloosterman sums. Assuming this
Kloosterman anti-concentration conjecture, we prove that, for every
\(0<\alpha<\frac12\), there exists a constant \(C_{m,\alpha}\) such that
\[
    \min\{|E|,|F|\}\ge C_{m,\alpha}q^m
    \quad\Longrightarrow\quad
    |\Delta_Q^\times(E,F)|>\alpha(q-1)
\]
for all \(E,F\subset\F_q^{2m}\).
More generally, a \(q^{-\theta}\)-level version of the Kloosterman
input yields the geometric threshold \(q^{m+\theta}\).
Unconditionally, a second-moment estimate gives, for every fixed
\(0<\alpha<1\),
\[
    \min\{|E|,|F|\}\ge C_{m,\alpha}q^{m+\frac12}
    \quad\Longrightarrow\quad
    |\Delta_Q^\times(E,F)|>\alpha(q-1).
\]
For the smaller range \(0<\alpha<\frac12\), a cubic-moment argument
improves the \(L^1\) lower bound from the \(q^{-\frac12}\) scale to the
\(q^{-\frac13}\) scale and consequently improves the distance
threshold to \(q^{m+\frac13}\).
The proof uses positive semidefinite \(2\times2\) Gram matrices on
quadratic frequency shells, the shell Fourier transform in even
dimension, and a minimax separation argument that produces a uniform
signed combination of Kloosterman columns. We also provide evidence
for the constant-scale Kloosterman conjecture and discuss its
limitations near full support.
\end{abstract}

\section{Introduction}
Throughout, $q$ is an odd prime power, $m\ge1$, and
\[
    V=\F_q^{2m},
    \qquad
    Q(x)=x_1^2+\cdots+x_{2m}^2.
\]
For $E\subset V$, define 
\[
    \Delta_Q(E):=\{Q(x-y):x, y\in E\}, ~~~\Delta_Q^\times(E):=\Delta_Q(E) \cap \F_q^\times.
\]
For two sets \(E,F\subset V\), we also write
\[
    \Delta_Q(E,F):=\{Q(x-y):x\in E,\ y\in F\}, ~~~\Delta_Q^\times(E,F):=\Delta_Q(E,F) \cap \F_q^\times.
\]
The well--known Erd\H{o}s--Falconer distance conjecture states that if $|E|\gg q^m$, then $|\Delta_Q(E)|\gg q$.

In general dimension \(d\), Iosevich and Rudnev
\cite{IosevichRudnev2007} proved that if \(|E|\gtrsim q^{\frac{d+1}{2}}\), then \(|\Delta_Q(E)|\gtrsim q\).
For \(d=2m\), this gives the exponent \(m+\frac12\). The
Iosevich--Rudnev exponent is sharp in odd dimensions, as shown by a
construction in \cite{HIKR11}. Although a series of papers has studied this problem (see, for example, \cite{chapman, cheong, JF, koh2023, mathz, KohPhamVinh2021, KSun, BL, mu2, pham11, pham2020, pham, PX26,  Pham-Yoo, ZhangLP}), the conjecture in even dimensions has remained open for more than twenty years. 
In the plane, the current record exponents are \(\frac{5}{4}\) over prime fields \cite{mu2} and \(\frac{4}{3}\) over arbitrary finite fields \cite{chapman}. In higher even dimensions, Pham and Xue, in
a breakthrough paper \cite{PX26}, introduced a novel method to break the \(L^2\) spherical Stein--Tomas exponent in \(\mathbb F_p^4\) for an odd prime \(p\); as an application of their main result, they obtained the first improvement of the exponent from \(m+\frac{1}{2}=2+\frac{1}{2}\) to \(\frac{77}{31}\).

Interestingly, a recent linear programming (LP)-based argument of Zhang \cite{ZhangLP} establishes a stronger exponent \(m+\frac{1}{3}\), $m\ge 2$, for this problem over arbitrary finite fields
\(\mathbb F_q\). More precisely, it shows that if
\[
    |\Delta_Q(E)\cap \mathbb F_q^\times|\le \alpha(q-1)
\]
for some \(0<\alpha<\frac{1}{2}\), then
\[
    |E|\ll_\alpha q^{m+\frac{1}{3}}.
\]
We briefly compare the two approaches: restriction theory and the
linear programming method. At the level of one-set support exponents,
the LP result is currently sharper: \(m+\frac{1}{3}\) improves on
\(\frac{77}{31}\), the exponent obtained via restriction theory in
\(\mathbb F_p^4\), and it holds over arbitrary finite fields
\(\mathbb F_q\), rather than only over prime fields. This sharper
exponent, however, comes with a weaker conclusion. The LP
theorem is a one-set support theorem: it shows that if \(E\) determines
only an \(\alpha\)-proportion, \(\alpha<\frac{1}{2}\), of the nonzero distances, then \(E\) cannot be too large. 
This is qualitatively different from an \(L^2\) estimate for distance multiplicities, which restriction theory targets directly and which can yield almost-all conclusions of the form
\((1-o(1))q\) in the settings where sufficiently strong restriction
estimates are available.

The distinction relevant here is structural. Zhang's scalar LP
argument uses the positivity of \(|\widehat{\mathbf 1_E}|^2\) on dual
quadratic levels. For two sets, the corresponding mixed Fourier term
\[
    \widehat{\mathbf 1_E}\,\overline{\widehat{\mathbf 1_F}}
\]
has no fixed sign, so the scalar certificate does not directly extend
to cross-distances. Restriction and incidence methods can instead
access two-set and pinned questions through estimates for distance
multiplicities. This motivates a matrix-valued positivity principle
adapted to cross-correlations. The present paper develops such a
principle by replacing scalar Delsarte positivity with positive
semidefinite \(2\times2\) Gram matrices on frequency shells. It leads
to a reduction of the two-set support problem to a one-dimensional
anti-concentration problem for Kloosterman sums.

Throughout the paper, we write
\[
    K(a):=\sum_{r\in\mathbb F_q^\times}
    \chi\left(r+\frac{a}{r}\right),
    \qquad a\in\mathbb F_q,
\]
for the classical Kloosterman sum over \(\mathbb F_q\). For a nonempty
set \(T\subseteq\mathbb F_q^\times\), write
\begin{align}\label{definition-simplex}
    \Prob(T):=\left\{\lambda=(\lambda_t)_{t\in T}:
    \lambda_t\ge0,\ \sum_{t\in T}\lambda_t=1\right\}.
\end{align}
The analytic input needed by the semidefinite argument is the following
uniform \(L^1\) anti-concentration statement for positive convex
combinations of Kloosterman columns.

\begin{conjecture}
\label{conj:kloosterman-L1}
For every \(0<\alpha<\frac12\), there exists a constant \(c_\alpha>0\)
such that the following holds for every odd prime power \(q\): if
\(\varnothing\ne T\subseteq\mathbb F_q^\times\) satisfies
\[
    |T|\le \alpha(q-1),
\]
then every probability measure \(\lambda\in\Prob(T)\) satisfies
\[
    \Phi_T(\lambda)
    :=
    \frac1{q-1}\sum_{s\in\mathbb F_q^\times}
    \left|\sum_{t\in T}\lambda_tK(st)\right|
    \ge c_\alpha.
\]
\end{conjecture}

The presently known evidence and substitutes for
Conjecture~\ref{conj:kloosterman-L1} may be summarized as follows.
Section~\ref{sec4} proves a constant-scale lower bound for certain
concentrated measures and for the uniform measures on some structured
supports formed from multiplicative cosets.  It also shows that a
uniform positive lower bound fails for full support and for a sequence
of supports whose densities tend to \(1\).  For arbitrary probability
measures, we prove an unconditional \(q^{-\frac12}\)-level bound for
every fixed support density \(0<\alpha<1\), and a stronger
\(q^{-\frac13}\)-level bound when \(0<\alpha<\frac12\).  The examples
near full support do not rule out a constant-scale estimate for any
fixed \(\alpha\in(\frac12,1)\).

Conjecture~\ref{conj:kloosterman-L1} is deliberately stated without
reference to the distance problem.  It asserts that no positive convex
combination of the Kloosterman columns indexed by a set of density
strictly less than \(\frac{1}{2}\) can have small average absolute value in the \(s\)-variable. 
A notable feature of the reduction is that this
analytic conjecture is dimension-free: once the Kloosterman \(L^1\)
lower bound is known over \(\mathbb F_q^\times\), the same input gives
the \(q^m\)-scale conclusion in every even dimension \(2m\).

\begin{theorem}
\label{thm:kloosterman-L1-distance}
Assume Conjecture~\ref{conj:kloosterman-L1} holds for a fixed
\(0<\alpha<\frac12\). Then, there exists a constant \(C_{m,\alpha}\)
such that, for all \(E,F\subset V\),
\[
    \min\{|E|,|F| \} \ge C_{m,\alpha}q^m
\]
implies
\[
    |\Delta_Q^\times(E,F)|>\alpha(q-1).
\]
Equivalently, it is enough to prove that:
if \(T\subseteq\mathbb F_q^\times\) satisfies
\(|T|\le\alpha(q-1)\), and if \(E,F\subset V\) satisfy
\[
    |E|=|F|=L,
    \qquad
    \Delta_Q^\times(E,F)\subseteq T,
\]
then
\[
    L\le C_{m,\alpha}q^m.
\]
\end{theorem}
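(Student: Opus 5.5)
The plan is to run a two-set Delsarte/LP argument in which the scalar positivity of $|\wh{\1_E}|^2$ is replaced by Cauchy--Schwarz for the $2\times2$ Gram matrix on each frequency shell. The dual certificate will come from Conjecture~\ref{conj:kloosterman-L1} via a minimax step.

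\textbf{Setup.} Put $f=\1_E$ and $g=\1_F$ with $|E|=|F|=L$, and define
\[
\nu(t)=\#\{(x,y)\in E\times F: Q(x-y)=t\}.
\]
The hypothesis $\Delta_Q^\times(E,F)\subseteq T$ means $\nu(t)=0$ for $t\in\F_q^\times\setminus T$. Expanding $\nu(t)=\sum_\xi \wh f(\xi)\ol{\wh g(\xi)}\,\wh{S_t}(\xi)$, with $S_t=\{Q=t\}$, we use the shell Fourier transform in even dimension $2m$. Since the Gauss sum raised to the power $2m$ is $\pm q^m$, the transform is real up to a fixed sign. It gives
\[
\nu(t)=\frac{L^2|S_t|}{q^{2m}}+(\text{cone term})+c_{m,q}\,q^{m-1}\sum_{j\ne0}C_j\,K(tj/4),
\]
where $C_j=q^{2m}\sum_{Q(\xi)=j}\wh f\,\ol{\wh g}$ is the off-diagonal entry of the Gram matrix
\[
\begin{pmatrix}A_j & C_j\\ \ol{C_j} & B_j\end{pmatrix}\succeq0
\]
on the shell $Q(\xi)=j$. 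Positive semidefiniteness gives $|C_j|\le\sqrt{A_jB_j}$. Plancherel then gives
\[
\sum_j|C_j|\le\Big(\sum_jA_j\Big)^{1/2}\Big(\sum_jB_j\Big)^{1/2}\lesssim q^{2m}L.
\]
The cone term, coming from $\xi\neq0$ with $Q(\xi)=0$, has the form $\text{(const)}\cdot\sum_{Q(\xi)=0,\xi\ne0}\wh f\ol{\wh g}$. It is independent of $t\ne0$, and is also bounded by $q^{m-1}\cdot q^{2m}L$ up to a normalising factor.

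\textbf{Certificate by minimax.} Conjecture~\ref{conj:kloosterman-L1} says that $\min_{\lambda\in\Prob(T)}\max_{\|\eps\|_\infty\le1}\frac1{q-1}\sum_s\eps(s)\sum_t\lambda_tK(st)\ge c_\alpha$. The simplex and the cube are compact and convex, and the pairing is bilinear, so von Neumann's minimax theorem yields a single $\eps:\F_q^\times\to[-1,1]$ with
\[
h(t):=\frac1{q-1}\sum_s\eps(s)K(st)\ge c_\alpha\qquad\text{for every }t\in T.
\]
Now evaluate $\Sigma:=\sum_{t\in\F_q^\times}h(t)\nu(t)$ in two ways.

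First, $\nu$ vanishes off $T$ on $\F_q^\times$, so
\[
\Sigma\ge c_\alpha\sum_{t\in T}\nu(t)=c_\alpha\bigl(L^2-\nu(0)\bigr).
\]

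Second, insert the Fourier expansion.
\begin{itemize}
\item The main term is at most $\frac{L^2}{q}\bigl|\sum_th(t)\bigr|+O(L^2q^{-1-\frac12})$. Using $\sum_{t\ne0}K(st)=1$, we get $|\sum_th(t)|\le1$, so this contributes $O(L^2/q)$.
\item The cone term is multiplied by $\sum_th(t)$, which is again $O(1)$.
\item The Kloosterman term requires $\sum_t h(t)K(tj/4)=\frac1{q-1}\sum_s\eps(s)\sum_tK(st)K(tj/4)$. Multiplicative orthogonality of Kloosterman sums gives $\sum_tK(st)K(tu)=q\,\1_{s=u}\cdot(q-1)/(q-1)+O(q)$-type bounded terms. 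Hence this is $O(q)$ uniformly in $j$.
\end{itemize}
So the shell contribution is $\lesssim q^{m-1}\cdot q\cdot q^{-2m}\sum_j|C_j|\cdot(\text{normalisation})\lesssim q^mL$.

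\textbf{Conclusion and main obstacle.} Altogether we get
\[
c_\alpha(L^2-\nu(0))\le O(L^2/q)+O_m(q^mL).
\]
It remains to bound $\nu(0)$. Apply the same Fourier expansion at $t=0$, using the Gram bound on every shell:
\[
\nu(0)\le L^2|S_0|q^{-2m}+O(q^mL)=L^2/q+O(q^mL).
\]
For $q$ larger than a constant depending on $\alpha$ this gives $L\le C_{m,\alpha}q^m$; small $q$ are absorbed into the constant. The reduction from the general statement to $|E|=|F|=L$ is by passing to subsets of size $\min\{|E|,|F|\}$.

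The main obstacle I expect is the exact bookkeeping in the Kloosterman step. I must verify that the correlation $\sum_tK(st)K(tu)$ really equals $q\delta_{s,u}$ plus an $O(q)$ term that is uniform in $s,u$, including the degenerate frequencies. Any loss there, for instance from treating the sum as merely $O(q^{3/2})$, would move the threshold from $q^m$ to $q^{m+1/2}$. It is therefore exactly at this step that the uniform signed column $\eps$, rather than a crude bound on $h$, must be used. A secondary point is to check that the isotropic-cone and $t=0$ contributions, which have no Kloosterman structure, stay at the $q^mL$ scale in every even dimension.
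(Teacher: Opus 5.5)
Your proof is correct, and it is the dual of the paper's argument rather than a copy of it.

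\textbf{How the paper proceeds.} The paper puts the minimax coefficients on the frequency side. It tests each shell Gram matrix $G_s(E,F)$ against the positive semidefinite multiplier $H_s$ and rewrites every Gram entry in physical space via Lemma~\ref{lem:gram-kloosterman-expansion}. It then needs only the column-sum identity $\sum_s K(sv)=1$ to get $\frac1{q-1}\sum_s(A_s^E+A_s^F)=2q^m+O(1+a_0^E+a_0^F)+O(L/q)$. The sign $\eps_m$ is absorbed by choosing $\sigma_s=-\eps_m\tau_s$.

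\textbf{How you proceed.} You pair the certificate $h=\frac1{q-1}\mathcal K\tau$ (your $\eps$) with the physical counts $\nu$, summed over all of $\F_q^\times$. The lower bound $c_\alpha(L^2-\nu(0))$ then uses only $\nu\ge0$, and you evaluate the pairing on the frequency side. That evaluation needs the second-moment identity \eqref{eq:K-autocorrelation-discussion}, namely $\sum_{t\in\F_q^\times}K(st)K(tu)=q^2\1_{s=u}-q-1$. The diagonal term is $q^2\1_{s=u}$, not $q\,\1_{s=u}$ as you wrote. The identity gives exactly $\sum_t h(t)K(tu)=\frac{q^2}{q-1}\tau(u)-\frac{q+1}{q-1}\sum_s\tau(s)$, which is at most $\frac{q^2}{q-1}+q+1$ uniformly in $u\in\F_q^\times$. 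So the bookkeeping step you flagged goes through, and the threshold stays at $q^m$.

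\textbf{What each route buys.} In your version the only positivity used is Cauchy--Schwarz plus Plancherel, $\sum_{\xi\ne0}|\wh{\1_E}(\xi)\ol{\wh{\1_F}(\xi)}|\le q^{2m}L$. So the per-shell $2\times2$ Gram matrices are not actually needed, and $\eps_m$ never has to be tracked. The paper's version avoids the Kloosterman second-moment identity and stays in the semidefinite Delsarte form it wants to emphasize.

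\textbf{Small corrections.}
\begin{itemize}
\item The $t=0$ shell transform is not of Kloosterman type; it equals $q^{2m-1}\1_{\xi=0}+\eps_mq^m\1_{Q(\xi)=0}-\eps_mq^{m-1}$. This still gives $\nu(0)\le L^2/q+O(q^mL)$, using $L\le q^{2m}$ (or cite Lemma~\ref{lem:zero-incidence}).
\item Since $|S_t|=q^{2m-1}-\eps_mq^{m-1}$ is constant on $\F_q^\times$, the main term is exactly $\frac{L^2|S_1|}{q^{2m}}\sum_t h(t)$. There is no $t$-dependent error to sum against $|h|$.
\item Normalize $C_j:=\sum_{Q(\xi)=j}\wh{\1_E}\,\ol{\wh{\1_F}}$ and keep the factor $\eps_m q^{-m-1}$ outside.
\end{itemize}
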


The same proof gives a quantitative dictionary between Kloosterman
anti-concentration and geometric distance thresholds.

\begin{proposition}
\label{prop:quantitative-transfer}
Fix \(0<\alpha<1\) and \(0\le\theta<1\). Suppose that there is a
constant \(c_{\alpha,\theta}>0\) such that, for every odd prime power
\(q\), every nonempty \(T\subseteq\mathbb F_q^\times\) with
\(|T|\le\alpha(q-1)\), and every \(\lambda\in\Prob(T)\), one has
\[
    \frac1{q-1}\sum_{s\in\mathbb F_q^\times}
    \left|\sum_{t\in T}\lambda_tK(st)\right|
    \ge c_{\alpha,\theta}q^{-\theta}.
\]
Then, there exists \(C_{m,\alpha,\theta}\) such that
\[
    \min\{|E|, |F|\} \ge C_{m,\alpha,\theta}q^{m+\theta}
    \quad\Longrightarrow\quad
    |\Delta_Q^\times(E,F)|>\alpha(q-1).
\]
\end{proposition}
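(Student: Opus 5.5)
We will prove the contrapositive in its symmetric form. We assume \(T\subseteq\F_q^\times\) with \(|T|\le\alpha(q-1)\), and \(E,F\subset V\) with \(|E|=|F|=L\) and \(\Delta_Q^\times(E,F)\subseteq T\), and we aim to show \(L\lesssim q^{m+\theta}\). The general case reduces to this one by passing to subsets of size \(\min\{|E|,|F|\}\).

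\textbf{Step 1: Fourier expansion on shells.} Write \(\nu(t):=|\{(x,y)\in E\times F: Q(x-y)=t\}|\). Then \(\sum_t\nu(t)=L^2\) and \(\nu\) is supported on \(T\cup\{0\}\). Expand \(\nu(t)\) via the Fourier transform of the sphere \(S_t=\{Q=t\}\) in even dimension \(2m\). For \(t\ne0\) one has \(\wh{\1_{S_t}}(\xi)=q^{-1}\delta_0(\xi)q^{2m-1}+q^{-m-1}\eta^m\cdot(\text{Kloosterman-type sum } K(tQ(\xi)/4)\text{-like})\), up to normalization and a quadratic character factor \(\eta(-1)^m\). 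Grouping frequencies by shells \(Q(\xi)=s\), set
\[
 M(s):=\sum_{Q(\xi)=s}\wh{\1_E}(\xi)\ol{\wh{\1_F}(\xi)}.
\]
Then for \(t\in\F_q^\times\),
\[
 \nu(t)=\frac{L^2}{q}+c\,q^{m-1}\sum_{s\in\F_q^\times}M(s)K(st/4)+(\text{term from }s=0),
\]
where \(c\) is of modulus comparable to \(q^{-?}\) after normalization. The \(s=0\) shell contributes a \(t\)-independent multiple of a Ramanujan-type sum, which is absorbed into the main term.

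\textbf{Step 2: minimax to get a test measure.} By Conjecture-type hypothesis, for every \(\lambda\in\Prob(T)\) we have \(\Phi_T(\lambda)\ge c q^{-\theta}\). Apply the minimax theorem (Hahn--Banach on the compact convex set \(\Prob(T)\) against the unit ball of \(\ell^\infty(\F_q^\times)\)). This yields a sign/phase function \(\sigma:\F_q^\times\to\{|z|\le1\}\) such that
\[
 \frac1{q-1}\sum_s\sigma(s)K(st)\ge cq^{-\theta}\quad\text{for every }t\in T
\]
(the ``uniform signed combination of Kloosterman columns''). Since \(K\) is real, we may take \(\sigma\) real-valued in \([-1,1]\).

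\textbf{Step 3: positivity via Gram matrices.} Pair the identity of Step 1 against \(\sigma\). Summing \(\nu(t)\) times \(\sum_s\sigma(s)K(st)\) over \(t\in T\) gives a lower bound of at least \(cq^{-\theta}(q-1)\cdot\sum_{t\in T}\nu(t)\approx cq^{1-\theta}L^2\), minus the contribution of \(\nu(0)\le L\,q^{2m-1}\)-type terms, which are handled separately. On the Fourier side the same sum equals a combination of \(M(s)\) weighted by bounded coefficients. To control \(|M(s)|\), use that the \(2\times2\) matrix
\[
\begin{pmatrix}\sum_{Q(\xi)=s}|\wh{\1_E}|^2 & M(s)\\ \ol{M(s)} & \sum_{Q(\xi)=s}|\wh{\1_F}|^2\end{pmatrix}
\]
is positive semidefinite. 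Hence \(|M(s)|\le\frac12(A_E(s)+A_F(s))\), and Plancherel gives \(\sum_sA_E(s)\le q^{2m}L\). Combining these with the Kloosterman bound \(|K|\le2\sqrt q\) where needed, one obtains
\[
 q^{1-\theta}L^2\lesssim q^{m+1}L,
\]
so \(L\lesssim q^{m+\theta}\), with the constant depending on \(m,\alpha,\theta\).

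\textbf{Main obstacle.} The delicate step is bookkeeping the normalizations in Step 1: the \(s=0\) shell, the \(t=0\) distance \(\nu(0)\), and the quadratic Gauss sum phases. These must combine so that the Kloosterman argument appears exactly as \(K(st)\) up to a rescaling of \(s\). Since \(s\mapsto s/4\) is a bijection of \(\F_q^\times\), the rescaling does not affect \(\Phi_T\). We also must ensure that the pairing with \(\sigma\) turns the unsigned \(M(s)\) into a quantity bounded by the Gram diagonal rather than needing a sign. We would first verify the exact shell-transform constant in dimension \(2m\), where the Gauss-sum factor is \(\eta(-1)^m q^m\), before closing the inequality.
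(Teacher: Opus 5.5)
Your strategy is the paper's argument run in the opposite direction. You pair the cross-distance counts \(\nu(t)\) with \(w(t):=\sum_{s\in\F_q^\times}\sigma(s)K(st)\) and push the result to the shells. The paper instead tests \(G_s(E,F)\) against \(H_s\) and expands the entries in physical space. Replacing the paper's exact evaluation of the diagonal entries by Plancherel is legitimate and slightly simpler: positivity is then used only through \(|M_s|\le\frac12\sum_{\xi\in\Sigma_s}(|\wh{\1_E}(\xi)|^2+|\wh{\1_F}(\xi)|^2)\), and \(\sum_sK(sv)=1\) is no longer needed.

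\textbf{First gap: the coefficient of the shell sums.} This is the step that fixes the exponent, and you assert it (``bounded coefficients'', ``\(q^{-?}\)'') rather than derive it; the tool you name for it is the wrong one. Put \(M_s:=\sum_{\xi\in\Sigma_s}\wh{\1_E}(\xi)\ol{\wh{\1_F}(\xi)}\). Lemma~\ref{lem:shell-eigenvalue} gives \(K(sQ(z))=\eps_mq^{1-m}\bigl(\sum_{\xi\in\Sigma_s}\chi(-z\cdot\xi)-q^{2m-1}\1_{z=0}\bigr)\). Multiplying by \(\sigma(s)\) and summing over \(z=x-y\) and \(s\) gives exactly
\[
\sum_{t\in T}\nu(t)w(t)-\nu(0)\sum_{s}\sigma(s)=\eps_mq^{1-m}\sum_{s}\sigma(s)M_s-\eps_mq^{m}|E\cap F|\sum_{s}\sigma(s).
\]
So the coefficient is \(\eps_mq^{1-m}\sigma(s)\), and Plancherel then yields your \(q^{m+1}L\). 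If you instead Fourier-expand \(\nu(t)\) as in Step 1, you must evaluate \(\sum_{t\in\F_q^\times}w(t)K(ts'/4)=q^2\sigma(s'/4)-(q+1)\sum_s\sigma(s)\). This requires the exact orthogonality \(\mathcal K^2=q^2I-(q+1)J\) (Lemma~\ref{lem:discussion-K-square}). Estimating that sum with \(|K|\le2\sqrt q\) or Cauchy--Schwarz gives \(O(q^3)\) instead of \(O(q^2)\), and hence only \(L\lesssim q^{m+1+\theta}\). The Weil bound plays no role in the transfer.

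\textbf{Second gap: the zero distances.} \(\nu(0)\) enters with a factor of size \(q\) on both sides. It appears in your lower bound \(\sum_{t\in T}\nu(t)w(t)\ge c_{\alpha,\theta}q^{-\theta}(q-1)(L^2-\nu(0))\), and in the term \(\nu(0)\sum_s\sigma(s)\) above. The bound \(\nu(0)\le L|S_0|\approx Lq^{2m-1}\) that you quote leads only to \(L\lesssim q^{2m-1+\theta}\), which is useless for \(m\ge2\). What is needed is the zero-radius incidence estimate \(\nu(0)\le L^2/q+C_mq^mL\) (Lemma~\ref{lem:zero-incidence}). It follows from \(|\wh{\1_{S_0}}(\xi)|\le q^m\) for \(\xi\ne0\), by the same Fourier expansion as in your Step 1. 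With it one gets
\[
c_{\alpha,\theta}q^{-\theta}(q-1)\Bigl(\bigl(1-\tfrac1q\bigr)L^2-C_mq^mL\Bigr)\le L^2+(C_m+2)q^{m+1}L .
\]
The stray \(L^2\) on the right is, in your Step 1 normalization, the \(\xi=0\) term \((\sum_s\sigma(s))L^2/q\). It is absorbed into the left side only because \(\theta<1\) and \(q\) is large, and your sketch never uses \(\theta<1\). The case \(T=\varnothing\), where the hypothesis provides no \(\sigma\), must also be handled by this incidence bound alone.
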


Proposition~\ref{prop:quantitative-transfer} makes the hierarchy of
consequences explicit.  The conjectural constant-scale estimate
(\(\theta=0\)) gives the natural \(q^m\) threshold for
\(0<\alpha<\frac12\).  Unconditionally, the second-moment argument of
Subsection~\ref{subsec:discussion-theta-half} gives
\(\theta=\frac12\) for every fixed \(0<\alpha<1\), while the cubic
argument of Subsection~\ref{subsec:discussion-theta-third} improves
this to \(\theta=\frac13\) only in the range
\(0<\alpha<\frac12\).  Thus the unconditional conclusions are the
following.

\begin{corollary}
\label{cor:unconditional-baseline}
Let \(m\ge1\).
\begin{enumerate}[label=\textup{(\roman*)}]
\item For every fixed \(0<\alpha<1\), there exists
      \(C_{m,\alpha}>0\) such that, for all
      \(E,F\subset V\),
      \[
          \min\{|E|,|F|\}\ge C_{m,\alpha}q^{m+\frac12}
          \quad\Longrightarrow\quad
          |\Delta_Q^\times(E,F)|>\alpha(q-1).
      \]
\item For every fixed \(0<\alpha<\frac12\), there exists
      \(C_{m,\alpha}>0\) such that, for all
      \(E,F\subset V\),
      \[
          \min\{|E|,|F|\}\ge C_{m,\alpha}q^{m+\frac13}
          \quad\Longrightarrow\quad
          |\Delta_Q^\times(E,F)|>\alpha(q-1).
      \]
\end{enumerate}
\end{corollary}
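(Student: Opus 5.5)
The corollary follows by feeding unconditional Kloosterman $L^1$ lower bounds into Proposition~\ref{prop:quantitative-transfer} with $\theta=\frac12$ and $\theta=\frac13$. So the plan is to prove, for every nonempty $T\subseteq\F_q^\times$ with $|T|\le\alpha(q-1)$ and every $\lambda\in\Prob(T)$, the bounds
\[
    \Phi_T(\lambda)\ge c_\alpha q^{-1/2}\quad(0<\alpha<1),
    \qquad
    \Phi_T(\lambda)\ge c_\alpha q^{-1/3}\quad(0<\alpha<\tfrac12).
\]
These are exactly the statements announced for Subsections~\ref{subsec:discussion-theta-half} and~\ref{subsec:discussion-theta-third}. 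Write $G(s)=\sum_t\lambda_tK(st)$.

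\emph{Second-moment step.} We use $\Phi\ge \|G\|_2^2/\|G\|_\infty$, with averages over $s\in\F_q^\times$. Weil's bound gives $\|G\|_\infty\le 2\sqrt q$. For $\|G\|_2^2$, expand $K(st)$ via the substitution $r\mapsto$ and apply orthogonality in $s$. This gives $\frac1{q-1}\sum_s K(st)\overline{K(st')}=q\,\1_{t=t'}-(\text{lower order})$, using $\sum_{a}|K(a)|^2=q^2-q-1$-type identities for the multiplicatively shifted correlations. Hence
\[
    \|G\|_2^2\ge q\sum_t\lambda_t^2-O\Bigl(\bigl(\textstyle\sum_t\lambda_t\bigr)^2\Bigr)
    \ge \frac{q}{|T|}-O(1)\ge \frac{1}{\alpha}-O(1).
\]
The off-diagonal correlation $\frac1{q-1}\sum_s K(st)\overline{K(st')}$ for $t\ne t'$ should equal a small constant, roughly $-1$ or $O(1)$. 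This is exact once one computes that $\sum_a K(a)\overline{K(ua)}$ for $u\ne1$ reduces to a sum whose value is $O(q)$. The lower bound must beat this negative constant. For $\alpha<1$ that requires a refinement: the main term is $q\sum\lambda_t^2$ minus roughly $\sum_{t\ne t'}\lambda_t\lambda_{t'}\cdot 1$. The density restriction $|T|\le\alpha(q-1)$ gives $q\sum\lambda_t^2\ge q/|T|\ge 1/\alpha>1$, so a positive gap $\frac1\alpha-1$ survives. The result is $\Phi\gtrsim(\alpha^{-1}-1)/(2\sqrt q)$, which is $\theta=\frac12$. Proposition~\ref{prop:quantitative-transfer} then yields part (i).

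\emph{Cubic step.} Here we use Hölder in the form $\|G\|_2^2\le\|G\|_1^{1/2}\|G\|_3^{3/2}$, or a variant $\|G\|_2^{2}\le \|G\|_1^{1/3}\|G\|_4^{4/3}$ adapted to third moments. This requires upper bounds for $\|G\|_3^3$ or $\|G\|_4^4$ in terms of $q$ and $\sum\lambda_t^2$. Expanding, one meets averages $\frac1{q-1}\sum_s K(st_1)K(st_2)K(st_3)$. By Deligne--Katz these are $O(q)$ generically and $O(q^{3/2})$ only on degenerate configurations. Since $K$ is real for odd $q$, the degenerate configurations are where some $t_i$ coincide. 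The diagonal contributes about $q^{3/2}\sum\lambda_t^3$, and off-diagonal terms contribute $O(q)$. Combining with $\|G\|_2^2\ge c$ from the previous step, and with $\sum\lambda_t^3\le(\max\lambda)\sum\lambda^2$, gives $\Phi\gtrsim q^{-1/3}$ after balancing.

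The main obstacle is this balancing. It needs the $\alpha<\frac12$ condition to keep the second moment strictly bounded below uniformly after the degenerate terms are removed. It also needs a case split: if $\max\lambda_t\ge q^{-1/3}$, handle the concentrated measure by isolating the large atom; otherwise, use the cubic bound. I would try that split first.
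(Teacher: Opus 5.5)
Part (i) of your plan is correct and is the paper's argument. Part (ii) has a genuine gap: the cubic step, as proposed, cannot reach the $q^{-1/3}$ scale.

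\textbf{Part (i).} Your overall structure, feeding unconditional $L^1$ bounds at $\theta=\frac12$ and $\theta=\frac13$ into Proposition~\ref{prop:quantitative-transfer}, is exactly how the paper proves the corollary. Your second-moment step is the paper's Proposition~\ref{prop:discussion-theta-half}. The one thing to add is the exact off-diagonal correlation $\sum_s K(st)K(su)=-q-1$ for $t\ne u$; an unspecified $O(1)$ would not produce the gap. With it, $\sum_s R_\lambda(s)^2=q^2\sum_t\lambda_t^2-q-1\ge\frac{1-\alpha}{\alpha}q$, and Weil finishes.

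\textbf{Why the cubic step fails.} The H\"older inequality $\|G\|_2^2\le\|G\|_1^{1/2}\|G\|_3^{3/2}$ needs an upper bound on $\frac1N\sum_s|G(s)|^3$. Expanding into triple correlations only controls the signed sum $\sum_s G(s)^3$.

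More seriously, even the signed expansion gives nothing useful. All triple correlations, degenerate ones included, satisfy $|\sum_s K(st_1)K(st_2)K(st_3)|\le(q+1)^2$ (Lemma~\ref{lem:discussion-cubic-correlation}). For example $\sum_a K(a)^3=O(q^2)$, not $q^{5/2}$, so your diagonal heuristic is off. Termwise estimation therefore gives only $\frac1N\sum_s G(s)^3=O(q)$. Your case split on $\max_t\lambda_t$ cannot improve this, because the $O(q)$ comes from the off-diagonal triples, not from the diagonal.

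Inserting $\|G\|_3^3\lesssim q$ and $\|G\|_2^2\gtrsim 1$ into H\"older gives only $\|G\|_1\gtrsim\|G\|_2^4/\|G\|_3^3\gtrsim q^{-1}$. That is worse than the second-moment bound; reaching $q^{-1/3}$ would need $\|G\|_3^3\lesssim q^{1/3}$. The fourth-moment variant is no better. Its correct form is $\|G\|_2^2\le\|G\|_1^{2/3}\|G\|_4^{4/3}$, and generic normalized quadruple correlations are of size $q^{3/2}$, so it gives at best $q^{-3/4}$.

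Your account of where $\alpha<\frac12$ enters also does not hold up. The lower bound on $\|G\|_2^2$ holds for every $\alpha<1$, which signals that the mechanism behind the $\frac12$ threshold is missing.

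\textbf{The missing idea: run the cubic argument on the dual side.} Write $\mathcal K=(K(st))_{s,t\in G}$ and $R=\mathcal K\lambda$. The identity $\mathcal K^2=q^2I-(q+1)J$ (Lemma~\ref{lem:discussion-K-square}, your second-moment computation in matrix form) gives the exact formula $\mathcal K R=q^2\lambda-(q+1)\1$.

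This vector equals $-(q+1)$ at the at least $(1-\alpha)N$ points of $G\setminus T$, and its mean is $\frac1{q-1}$. After centering, the one-sided cubic inequality of Lemma~\ref{lem:discussion-centered-cubic} applies. Its constant $d_\alpha=\frac{(1-\alpha)(1-2\alpha)}{\alpha^2}$ is positive exactly when $\alpha<\frac12$, and it yields $\sum_t(\mathcal KR)(t)^3\ge d_\alpha(q+1)^3N$.

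Now expand the cube trilinearly in $R$, not in $\lambda$. The uniform triple-correlation bound gives $\sum_t(\mathcal KR)(t)^3\le(q+1)^2S^3$, where $S=\sum_s|R(s)|=N\Phi_T(\lambda)$. Hence $S^3\ge d_\alpha(q^2-1)$, i.e.\ $\Phi_T(\lambda)\ge d_\alpha^{1/3}q^{-1/3}$.

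The $O(q^2)$ size of the triple correlations, fatal in your interpolation, is harmless here. It multiplies the $\ell^1$ quantity $S^3$ directly, so there is no interpolation loss.
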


The appearance of Kloosterman sums is not itself new: it follows from
the classical Fourier evaluation of quadratic shells
\cite{IosevichRudnev2007}. The corresponding shell eigenvalue matrix
also appears in Zhang's scalar one-set Delsarte formulation
\cite{ZhangLP}. The new mechanism here is a semidefinite two-set
Delsarte reduction. In the one-set problem, the
support condition on \(E-E\) controls an inner distribution and scalar
Delsarte positivity is available.  In the two-set problem, the
assumption \(\Delta_Q^\times(E,F)\subseteq T\) controls only the
cross-distances, while the two self-distance distributions remain
unrestricted.  We encode the two self-correlations and their mixed
correlation in a \(2\times2\) positive semidefinite Gram matrix on each
frequency shell.  Testing these matrices against \(2\times2\)
positive semidefinite multipliers and applying minimax separation
produces the \(L^1\) Kloosterman input used in
Conjecture~\ref{conj:kloosterman-L1}.

Existing restriction and incidence methods address two-set or pinned
distance questions through estimates for distance multiplicities; see,
for example, \cite{chapman,KSun,KohPhamVinh2021,mu2}. The result here
is instead a support theorem for two sets.  Accordingly, the claimed
novelty is neither a new evaluation of the quadratic-shell Fourier
transform nor a new occurrence of Kloosterman sums, but the
\(2\times2\) semidefinite positivity mechanism and the resulting
dimension-free reduction of the \(q^m\)-scale two-set problem to a
Kloosterman \(L^1\) anti-concentration statement.
The auxiliary shell identities and one-sided cubic inequalities used
in Section~\ref{sec4} come from Zhang's argument; the present framework
combines, for \(0<\alpha<\frac12\), the resulting
\(q^{-\frac13}\)-level input with the two-set transfer principle.

The even-dimensional hypothesis is essential for this particular
reduction.  When \(d=2m\), the Gauss-sum factor in the Fourier transform
of a quadratic shell has no surviving quadratic character, and the
resulting shell eigenvalues are classical Kloosterman sums.  In odd
dimension an extra quadratic character remains, leading instead to
Sali\'e-type sums.  This analytic distinction is consistent with the
known odd-dimensional examples showing that the natural even-dimensional
\(q^{d/2}\) threshold cannot hold in general.

The contribution therefore has two distinct layers.  The conceptual
and conditional part is the semidefinite framework together with the
dimension-free reduction: Conjecture~\ref{conj:kloosterman-L1} would
give the natural \(q^m\) threshold.  The unconditional part consists
of the moment estimates: the second moment recovers the
\(q^{m+\frac12}\) two-set threshold for every fixed
\(0<\alpha<1\), and the cubic moment improves it to
\(q^{m+\frac13}\) when \(0<\alpha<\frac12\).  The model cases and
near-full-support examples in Section~\ref{sec4} provide evidence and
limitations for the analytic conjecture, but are logically separate
from these two unconditional thresholds.

\paragraph{Main idea and sketch of proof.}
Assume that \(E,F\subset\mathbb F_q^{2m}\) are large but that all their
nonzero cross-distances lie in a small set
\(T\subset\mathbb F_q^\times\). For each nonzero frequency radius \(s\),
we restrict \(\widehat{\mathbf 1_E}\) and
\(\widehat{\mathbf 1_F}\) to the shell
\(\Sigma_s=\{\xi:Q(\xi)=4s\}\) and form the corresponding \(2\times2\)
Gram matrix. These matrices are positive semidefinite. The shell
Fourier formula rewrites their entries in physical space; in even
dimension the off-diagonal entry is controlled by the Kloosterman
coefficients \(K(st)\), \(t\in T\).

Conjecture~\ref{conj:kloosterman-L1} is then converted, via von
Neumann's minimax theorem, into a single choice of coefficients
\(\sigma_s\in[-1,1]\) such that the signed Kloosterman average has the
required sign uniformly for every \(t\in T\). Testing the positive Gram
matrices against the corresponding semidefinite multipliers forces a
contradiction unless \(|E|\) and \(|F|\) are \(O(q^m)\). The zero-radius sphere is handled separately by an incidence estimate, since over finite fields it may contain nonzero isotropic vectors.

\paragraph{Organization.}
Section~\ref{sec:Preliminaries} collects the Fourier-analytic preliminaries, evaluates the Fourier transform of quadratic shells, and constructs the frequency-shell Gram matrices. 

Section~\ref{sec3} establishes the quantitative semidefinite transfer
principle in Proposition~\ref{prop:quantitative-transfer}; its
constant-scale case, together with
Conjecture~\ref{conj:kloosterman-L1}, yields
Theorem~\ref{thm:kloosterman-L1-distance}.
Section~\ref{sec4} discusses Conjecture~\ref{conj:kloosterman-L1}: we
prove it in several model regimes, show its limitations at and near
full support, establish the universal \(q^{-\frac12}\)-level bound for
every fixed \(0<\alpha<1\), and prove the stronger
\(q^{-\frac13}\)-level bound for \(0<\alpha<\frac12\).  Through
Proposition~\ref{prop:quantitative-transfer}, these last two estimates
give parts \textup{(i)} and \textup{(ii)}, respectively, of
Corollary~\ref{cor:unconditional-baseline}.

\section{Preliminaries}\label{sec:Preliminaries}

Fix a nontrivial additive character $\chi$ of $\F_q$. We use the Fourier transform
\[
    \wh f(\xi)=\sum_{x\in V} f(x)\chi(-x\cdot \xi),
    \qquad \xi\in V.
\]
For $r\in\F_q$, write
\[
    S_r:=\{z\in V:Q(z)=r\}.
\]
Thus, $S_r$ is the $Q$-sphere of squared radius $r$, and $S_0$ is the zero radius sphere. Notice that, over a finite field, $S_0$ need not be the singleton $\{0\}$; it may contain nonzero isotropic vectors.

For $s\in\F_q^\times$, define the frequency sphere
\[
    \Sigma_s:=\{\xi\in V:Q(\xi)=4s\}.
\]
The factor $4$ appears because completing the square in the Fourier variable naturally produces $Q(\xi)/4$.

Let
\[
    \eps_m:=\eta(-1)^m\in\{\pm1\},
\]
where $\eta$ is the quadratic character of $\F_q^\times$. We use the classical, unnormalized Kloosterman sum
\[
    K(a):=\sum_{r\in\F_q^\times}\chi \bigg(r+\frac{a}{r} \bigg),
    \qquad a\in\F_q.
\]
In particular, $K(0)=-1$.

The following lemma bounds the number of pairs of points at distance
zero between a given pair of sets. A proof can be found in the literature, for example, as a consequence of a point-sphere incidence bound in \cite{CILRR2017, PPV}.
\begin{lemma}
\label{lem:zero-incidence}
There is a constant $C_m$ depending only on $m$ such that, for all $A,B\subset V$,
\[
    \big| \{(a,b)\in A\times B:Q(a-b)=0\}\big|
    \le \frac{|A||B|}{q}+C_mq^m(|A||B|)^{\frac{1}{2}}.
\]
\end{lemma}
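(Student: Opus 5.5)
Since the paper notes that this incidence bound is classical, I will give the standard Fourier-analytic argument. Write the count as
\[
N:=\sum_{a\in A}\sum_{b\in B}\1_{S_0}(a-b),
\]
and expand \(\1_{S_0}\) by Fourier inversion:
\[
\1_{S_0}(z)=q^{-2m}\sum_{\xi\in V}\wh{\1_{S_0}}(\xi)\chi(z\cdot\xi).
\]
This gives
\[
N=q^{-2m}\sum_{\xi}\wh{\1_{S_0}}(\xi)\,\ol{\wh{\1_A}(\xi)}\,\wh{\1_B}(\xi),
\]
up to the sign convention for \(\chi\).

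The term \(\xi=0\) contributes \(q^{-2m}|S_0||A||B|\). In even dimension \(2m\), the standard count is \(|S_0|=q^{2m-1}+O(q^m)\); more precisely,
\[
|S_0|=q^{2m-1}+\eps_m(q-1)q^{m-1}.
\]
This makes the main term \(|A||B|/q+O(q^{-m}|A||B|)\). The error part \(q^{-m}|A||B|\) is at most \(q^{m}(|A||B|)^{1/2}\), because \(|A||B|\le q^{4m}\). So the zero frequency is harmless.

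For \(\xi\ne0\), I would compute \(\wh{\1_{S_0}}(\xi)\) by writing
\[
\1_{S_0}(z)=q^{-1}\sum_{t\in\F_q}\chi(tQ(z)),
\]
and completing the square. This produces Gauss sums: the \(t=0\) term vanishes at nonzero \(\xi\), and the remaining terms give
\[
\wh{\1_{S_0}}(\xi)=q^{-1}\sum_{t\ne0}G(t)^{2m}\chi\bigl(-Q(\xi)/(4t)\bigr),
\]
where \(|G(t)|^{2m}=q^m\) and \(G(t)^{2m}=\eps_m q^m\) is independent of \(t\) in even dimension. Hence the sum over \(t\) is a Ramanujan-type sum equal to \(q-1\) or \(-1\), and \(|\wh{\1_{S_0}}(\xi)|\le q^{m}\) for all \(\xi\ne0\). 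This Gauss-sum evaluation is the only real computation, and I expect it to be routine; it is the same computation the paper uses for its shell transforms.

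Then Cauchy--Schwarz and Plancherel give
\[
\sum_\xi|\wh{\1_A}(\xi)\wh{\1_B}(\xi)|\le q^{2m}(|A||B|)^{1/2}.
\]
So the nonzero frequencies contribute at most
\[
q^{-2m}\cdot q^m\cdot q^{2m}(|A||B|)^{1/2}=q^m(|A||B|)^{1/2}.
\]
Combining the zero-frequency and nonzero-frequency contributions yields the lemma, with \(C_m\) absolute (for instance \(C_m=2\)).
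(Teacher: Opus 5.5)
Your proof is correct, but it takes a different route from the paper. The paper gives no argument of its own: it imports the lemma as the radius-zero case of a general point--sphere incidence bound, with $A$ as the points and $b+S_0$, $b\in B$, as the spheres. A standard way to prove such bounds is to lift spheres in $\F_q^{2m}$ to hyperplanes in $\F_q^{2m+1}$ via $x\mapsto(x,Q(x))$ and then apply a point--hyperplane incidence estimate. That yields the bound with constant $1$, uniformly for spheres of every radius.

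You instead compute $\wh{\1_{S_0}}$ exactly, using the same Gauss-sum completion as in Lemma~\ref{lem:shell-eigenvalue}. This gives $\eps_m(q-1)q^{m-1}$ at nonzero isotropic frequencies and $-\eps_m q^{m-1}$ at all other nonzero frequencies. You then finish with Cauchy--Schwarz and Plancherel. Your value $|S_0|=q^{2m-1}+\eps_m(q-1)q^{m-1}$ is right, and so is your absorption of the zero-frequency correction via $|A||B|\le q^{4m}$, so $C_m=2$ works. Your argument is self-contained within the paper's own Fourier framework, at the cost of being specific to the zero sphere.

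A small refinement: work with the kernel $\1_{S_0}-q^{-1}$ instead. Its transform is bounded by $(q-1)q^{m-1}$ at every frequency, including $\xi=0$, so a single Cauchy--Schwarz step gives the lemma with $C_m=1$, matching the incidence-theorem constant.
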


The next lemma is the eigenvalue computation for the radial kernels that
underlie the Delsarte--linear programming method. For each frequency shell \(\Sigma_s\), the sum of the characters \(\chi(-z\cdot\xi)\) over
\(\xi\in\Sigma_s\) defines a translation-invariant kernel on \(V\).  Because this kernel depends only on the value of \(Q(z)\), it is constant on the quadratic distance classes \(S_r\), and its values are the eigenvalues that will appear in the linear constraints for the distance distributions.

In the semidefinite version used below, these shell eigenvalues are paired with the positive semidefinite Gram matrices obtained by restricting \(\widehat{\mathbf 1_E}\) and \(\widehat{\mathbf 1_F}\) to \(\Sigma_s\). A signed average over the shells will then be chosen by the minimax, or linear programming separation, step so that distances in the proposed support \(T\) receive a negative contribution. Thus, Lemma~\ref{lem:shell-eigenvalue} is the bridge between Fourier positivity on frequency shells and the Kloosterman coefficients \(K(su)\) that drive the later linear programming inequality.
\begin{lemma}
\label{lem:shell-eigenvalue}
For $s\in\F_q^\times$ and $z\in V$,
\[
    \sum_{\xi\in\Sigma_s}\chi(-z\cdot\xi)
    =q^{2m-1}\1_{z=0}+\eps_m q^{m-1}K(sQ(z)).
\]
\end{lemma}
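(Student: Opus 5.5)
The plan is to detect the shell condition $Q(\xi)=4s$ with additive characters and then reduce everything to one-dimensional Gauss sums. Write
\[
    \sum_{\xi\in\Sigma_s}\chi(-z\cdot\xi)
    =\frac1q\sum_{\xi\in V}\sum_{t\in\F_q}\chi\bigl(t(Q(\xi)-4s)\bigr)\chi(-z\cdot\xi).
\]
The $t=0$ term gives $q^{-1}\sum_\xi\chi(-z\cdot\xi)=q^{2m-1}\1_{z=0}$, which is exactly the first term of Lemma~\ref{lem:shell-eigenvalue}. The remaining task is to show that the terms with $t\in\F_q^\times$ sum to $\eps_m q^{m-1}K(sQ(z))$.

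For $t\neq0$ the inner sum over $\xi$ factors coordinatewise as $\prod_{j=1}^{2m}\sum_{\xi_j\in\F_q}\chi(t\xi_j^2-z_j\xi_j)$. Completing the square, $t\xi_j^2-z_j\xi_j=t(\xi_j-z_j/(2t))^2-z_j^2/(4t)$, so each factor equals $\chi(-z_j^2/(4t))\,G(t)$, where $G(t)=\sum_{x}\chi(tx^2)=\eta(t)G(1)$ is the quadratic Gauss sum. Taking the product over $2m$ coordinates gives
\[
    \chi\bigl(-Q(z)/(4t)\bigr)\,\eta(t)^{2m}G(1)^{2m}.
\]
Here the even dimension is used: $\eta(t)^{2m}=1$, so no character of $t$ survives. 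The classical evaluation $G(1)^2=\eta(-1)q$ then gives $G(1)^{2m}=\eta(-1)^m q^m=\eps_m q^m$. This holds for any odd prime power $q$ with the Gauss sum attached to $\chi$, since only the identity $G(1)^2=\eta(-1)q$ is needed and not the sign of $G(1)$.

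The $t\ne0$ contribution is therefore
\[
    \frac{\eps_m q^m}{q}\sum_{t\in\F_q^\times}\chi\Bigl(-4st-\frac{Q(z)}{4t}\Bigr).
\]
Substitute $r=-4st$, which is a bijection of $\F_q^\times$ because $s\neq0$. Then $-Q(z)/(4t)=sQ(z)/r$, and the sum becomes $\sum_r\chi(r+sQ(z)/r)=K(sQ(z))$. This holds also when $Q(z)=0$, where it correctly gives $K(0)=-1$.

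Combining the two pieces yields the stated formula. The only delicate point is the bookkeeping of the factors $4$ and signs in the change of variables, together with the Gauss sum identity $G(1)^2=\eta(-1)q$. I would cite that identity as classical, or verify it quickly by computing $|G(1)|^2=q$ and $\overline{G(1)}=\eta(-1)G(1)$.
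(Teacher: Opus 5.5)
Your proof is correct and is essentially the paper's argument: detect $Q(\xi)=4s$ by orthogonality (your $t$ is the paper's $\rho/4$), isolate the $t=0$ term, complete the square coordinatewise, and substitute $r=-4st$ (the paper's $r=-\rho s$). The only difference is that you spell out the Gauss-sum evaluation $\eta(t)^{2m}G(1)^{2m}=\eta(-1)^m q^m$, which the paper leaves implicit in the phrase ``completing the square.''
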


\begin{proof}
By orthogonality,
\[
\sum_{\xi\in\Sigma_s}\chi(-z\cdot\xi)
=q^{-1}\sum_{\rho\in\F_q}\chi(-\rho s)
\sum_{\xi\in V}\chi\!\left(\frac{\rho}{4}Q(\xi)-z\cdot \xi\right).
\]
The term $\rho=0$ contributes $q^{2m-1}\1_{z=0}$. For $\rho\ne0$, completing the square in each coordinate gives
\[
    \sum_{\xi\in V}\chi\!\left(\frac{\rho}{4}Q(\xi)-z\cdot \xi\right)
    =\eps_m q^m \chi\bigg(- \frac{Q(z)}{\rho} \bigg).
\]
Therefore, the nonzero part is
\[
    \eps_m q^{m-1}
    \sum_{\rho\in\F_q^\times}\chi \bigg(-\rho s- \frac{Q(z)}{\rho} \bigg).
\]
After the change of variables $r=-\rho s$, this becomes
\[
    \eps_m q^{m-1}\sum_{r\in\F_q^\times}\chi \bigg(r+\frac{sQ(z)}{r} \bigg)
    =\eps_m q^{m-1}K(sQ(z)).
\]
\end{proof}
We now pass from the geometric distance information of the pair
\(E,F\) to frequency-side matrices supported on the shells \(\Sigma_s\).
The point is to encode the two self-correlations of \(E\) and \(F\), together with their cross-correlation, into a \(2\times2\) Gram matrix for each frequency radius \(s\in\F_q^\times\). Positivity of these Gram matrices will later be tested against a carefully chosen semidefinite multiplier. The second purpose of the subsection is to make the entries of these matrices explicit. Lemma~\ref{lem:shell-eigenvalue} converts each frequency-shell sum into a radial kernel depending only on \(Q(x-y)\), and in even dimension that radial kernel is the classical Kloosterman sum. Thus, the assumption \(\Delta_Q^\times(E,F)\subseteq T\) is transformed into a Kloosterman expansion supported on the set \(T\).

Assume $|E|=|F|=L$ and
\[
    \Delta_Q^\times(E,F)\subseteq T\subseteq\F_q^\times.
\]
Define normalized distance variables by
\begin{align*}
    a_u^E&:=\frac1L \big| \{(x,x')\in E^2:x\ne x',\ Q(x-x')=u\} \big| ,\qquad u\in\F_q,\\
    a_u^F&:=\frac1L \big| \{(y,y')\in F^2:y\ne y',\ Q(y-y')=u\} \big| ,\qquad u\in\F_q,\\
    h^{E,F}&:=\frac{|E\cap F|}{L},\\
    b_0^{E,F}&:=\frac1L \big| \{(x,y)\in E\times F:x\ne y,\ Q(x-y)=0\} \big|,\\
    b_t^{E,F}&:=\frac1L \big|\{(x,y)\in E\times F:Q(x-y)=t\} \big|,\qquad t\in T.
\end{align*}
These quantities satisfy \(h^{F,E}=h^{E,F}\) and
\(b_u^{F,E}=b_u^{E,F}\). In particular,
\(h^{E,E}=1\) and \(b_u^{E,E}=a_u^E\) for every \(u\in\F_q\).
Moreover, the support assumption gives
\[
    \sum_{t\in T}b_t^{E,F}=L-h^{E,F}-b_0^{E,F}.
\]
We next record the basic positivity statement. It is simply the observation that the three quantities \(A_s^E,A_s^F\), and \(A_s^{E,F}\) are the entries of the Gram matrix of the two vectors obtained by restricting \(\widehat{\mathbf 1_E}\) and \(\widehat{\mathbf 1_F}\) to the frequency sphere \(\Sigma_s\).

\begin{lemma}
\label{lem:frequency-gram-psd}
Let \(E,F\subset V\) with \(|E|=|F|=L>0\). For \(s\in\F_q^\times\), define
\[
    A_s^{E,F}:=\frac{1}{Lq^{m-1}}
    \sum_{\xi\in\Sigma_s}
    \wh{\1_E}(\xi)\ol{\wh{\1_F}(\xi)}, 
\]
and 
\[
A_s^E:=A_s^{E,E}=\frac{1}{Lq^{m-1}}
    \sum_{\xi\in\Sigma_s}|\wh{\1_E}(\xi)|^2, \qquad A_s^F:=A_s^{F,F}=\frac{1}{Lq^{m-1}}
    \sum_{\xi\in\Sigma_s}|\wh{\1_F}(\xi)|^2.
\]
Then, the Gram matrix
\[
    G_s(E,F):=
    \begin{pmatrix}
        A_s^E&A_s^{E,F}\\
        \ol{A_s^{E,F}}&A_s^F
    \end{pmatrix}
\]
is positive semidefinite. In particular,
\[
    A_s^E\ge0,\qquad A_s^F\ge0,\qquad
    |A_s^{E,F}|^2\le A_s^E A_s^F.
\]
\end{lemma}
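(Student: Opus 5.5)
The plan is to exhibit \(G_s(E,F)\) explicitly as a Gram matrix, which gives positive semidefiniteness at once, and then read off the three scalar consequences from the standard characterization of \(2\times2\) Hermitian positive semidefinite matrices.

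Fix \(s\in\F_q^\times\) and set \(c:=(Lq^{m-1})^{-1/2}>0\), which is well defined since \(L>0\). Consider the two vectors in \(\mathbb C^{\Sigma_s}\) given by
\[
    u:=\bigl(c\,\wh{\1_E}(\xi)\bigr)_{\xi\in\Sigma_s},
    \qquad
    v:=\bigl(c\,\wh{\1_F}(\xi)\bigr)_{\xi\in\Sigma_s},
\]
and use the standard inner product \(\langle w,w'\rangle=\sum_{\xi\in\Sigma_s}w(\xi)\ol{w'(\xi)}\). Comparing with the definitions in the statement, \(\langle u,u\rangle=A_s^E\), \(\langle v,v\rangle=A_s^F\), \(\langle u,v\rangle=A_s^{E,F}\) and \(\langle v,u\rangle=\ol{A_s^{E,F}}\). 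Hence \(G_s(E,F)\) is exactly the Gram matrix of the pair \((u,v)\). In particular it is Hermitian.

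For any \(z=(z_1,z_2)\in\mathbb C^2\), expanding the quadratic form gives
\[
    z^*G_s(E,F)z
    =\sum_{\xi\in\Sigma_s}\bigl|\,\ol{z_1}\,u(\xi)+\ol{z_2}\,v(\xi)\bigr|^2\ge0 .
\]
The placement of conjugates here should be checked against the convention \(\langle w,w'\rangle=\sum w\,\ol{w'}\), but any consistent choice produces a sum of squared moduli, so this step cannot fail. This proves that \(G_s(E,F)\) is positive semidefinite.

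For the "in particular" part:
\begin{enumerate}[label=\textup{(\roman*)}]
\item Taking \(z=(1,0)\) and \(z=(0,1)\) gives \(A_s^E\ge0\) and \(A_s^F\ge0\), which is also clear directly from the definitions.
\item Since a positive semidefinite matrix has nonnegative determinant, \(A_s^EA_s^F-|A_s^{E,F}|^2\ge0\).
\item Equivalently, (ii) is the Cauchy--Schwarz inequality \(|\langle u,v\rangle|^2\le\langle u,u\rangle\langle v,v\rangle\).
\end{enumerate}

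There is no real obstacle. The only point needing care is that the normalization \(1/(Lq^{m-1})\) is a positive real scalar, so it can be absorbed symmetrically into both vectors without affecting positivity.
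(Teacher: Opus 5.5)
Your proof is correct and is essentially the paper's argument. The paper writes \(G_s(E,F)=\frac{1}{Lq^{m-1}}\sum_{\xi\in\Sigma_s}v_\xi v_\xi^{\ast}\) with \(v_\xi=(\wh{\1_E}(\xi),\wh{\1_F}(\xi))^{\mathsf T}\), which is exactly the Gram matrix of your pair \((u,v)\); it records the same quadratic-form identity (your placement of conjugates is the correct one, so the hedge is unnecessary) and reads off the scalar inequalities from positive semidefiniteness of a \(2\times2\) Hermitian matrix, just as you do.
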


\begin{proof}
For each \(\xi\in\Sigma_s\), put
\[
    v_\xi:=
    \begin{pmatrix}
        \wh{\1_E}(\xi)\\
        \wh{\1_F}(\xi)
    \end{pmatrix}.
\]
Then
\[
    v_\xi v_\xi^{\ast}
    =
    \begin{pmatrix}
        |\wh{\1_E}(\xi)|^2&
        \wh{\1_E}(\xi)\ol{\wh{\1_F}(\xi)}\\
        \wh{\1_F}(\xi)\ol{\wh{\1_E}(\xi)}&
        |\wh{\1_F}(\xi)|^2
    \end{pmatrix},
\]
and hence, by the definitions of \(A_s^E,A_s^F\), and \(A_s^{E,F}\),
\[
    G_s(E,F)
    =
    \frac{1}{Lq^{m-1}}
    \sum_{\xi\in\Sigma_s} v_\xi v_\xi^{\ast}.
\]
Each matrix \(v_\xi v_\xi^{\ast}\) is rank-one positive semidefinite.  Since \(Lq^{m-1}>0\), the matrix \(G_s(E,F)\) is a positive scalar multiple of a sum of positive semidefinite matrices, and is therefore positive semidefinite.

Equivalently, for every \(c=(c_1,c_2)\in\mathbb C^2\),
\[
    c^{\ast}G_s(E,F)c
    =
    \frac{1}{Lq^{m-1}}
    \sum_{\xi\in\Sigma_s}
    \left|
        \ol{c_1}\wh{\1_E}(\xi)
        +
        \ol{c_2}\wh{\1_F}(\xi)
    \right|^2
    \ge0.
\]
The inequalities \(A_s^E\ge0\), \(A_s^F\ge0\), and
\(|A_s^{E,F}|^2\le A_s^E A_s^F\) follow from positive semidefiniteness of a
\(2\times2\) Hermitian matrix.
\end{proof}
We now evaluate the Gram entries in physical space. The previous lemma established positivity; the next lemma supplies the arithmetic content by using Lemma~\ref{lem:shell-eigenvalue} to rewrite the entries as Kloosterman sums weighted by the normalized distance multiplicities.

\begin{lemma}
\label{lem:gram-kloosterman-expansion}
Assume that \(|E|=|F|=L\) and that
\[
    \Delta_Q^\times(E,F)\subseteq T\subseteq \F_q^\times .
\]
Then, for every \(s\in\F_q^\times\),
\begin{align}
    A_s^E
    &=q^m+\eps_m\left(
        -1-a_0^E+\sum_{u\in\F_q^\times}K(su)a_u^E
    \right),\label{eq:self-E}\\
    A_s^F
    &=q^m+\eps_m\left(
        -1-a_0^F+\sum_{u\in\F_q^\times}K(su)a_u^F
    \right),\label{eq:self-F}\\
    A_s^{E,F}
    &=h^{E,F} q^m+\eps_m\left(
        -h^{E,F}-b_0^{E,F}
        +\sum_{t\in T}K(st)b_t^{E,F}
    \right).\label{eq:cross}
\end{align}
In particular, \(A_s^{E,F}\) is real.
\end{lemma}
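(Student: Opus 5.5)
We expand the Gram entries in physical space and apply Lemma~\ref{lem:shell-eigenvalue} pointwise. For any $E,F$ with $|E|=|F|=L$, expanding the product of Fourier transforms gives
\[
    \sum_{\xi\in\Sigma_s}\wh{\1_E}(\xi)\ol{\wh{\1_F}(\xi)}
    =\sum_{x\in E}\sum_{y\in F}\sum_{\xi\in\Sigma_s}\chi(-(x-y)\cdot\xi).
\]
Lemma~\ref{lem:shell-eigenvalue} with $z=x-y$ turns the inner sum into $q^{2m-1}\1_{x=y}+\eps_m q^{m-1}K(sQ(x-y))$. After dividing by $Lq^{m-1}$, the first term contributes $q^{m}|E\cap F|/L=h^{E,F}q^m$. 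The remaining term is $\eps_m L^{-1}\sum_{(x,y)\in E\times F}K(sQ(x-y))$.

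We then split the pairs $(x,y)$ into three groups.
\begin{itemize}[label=--]
\item The diagonal pairs $x=y$ number $|E\cap F|=Lh^{E,F}$, and each contributes $K(0)=-1$.
\item The pairs with $x\ne y$ and $Q(x-y)=0$ number $Lb_0^{E,F}$, and each also contributes $K(0)=-1$.
\item The pairs with $Q(x-y)=u\ne0$ necessarily have $x\ne y$. By the hypothesis $\Delta_Q^\times(E,F)\subseteq T$ we have $u\in T$, and these pairs number $Lb_u^{E,F}$.
\end{itemize}
Summing the three contributions gives
\[
\eps_m\Bigl(-h^{E,F}-b_0^{E,F}+\sum_{t\in T}K(st)b_t^{E,F}\Bigr),
\]
which is \eqref{eq:cross}.

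The self-correlation formulas \eqref{eq:self-E} and \eqref{eq:self-F} are the special case $F=E$ (respectively $E=F$). There $h^{E,E}=1$ and $b_u^{E,E}=a_u^E$. No support restriction is available, so the sum runs over all $u\in\F_q^\times$. The notation in the statement already anticipates this: the pairs $x=x'$ give $-1$, and the pairs with $x\ne x'$ at distance $0$ give $-a_0^E$.

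It remains to show that $A_s^{E,F}$ is real. Since $K(a)$ is a Kloosterman sum, its conjugate is obtained by the substitution $r\mapsto -r$:
\[
\ol{K(a)}=\sum_{r}\chi\bigl(-r-a/r\bigr)=\sum_{r}\chi\bigl(r+a/r\bigr)=K(a).
\]
So $K(a)$ is real. Every other quantity on the right-hand side of \eqref{eq:cross} is real, and hence so is $A_s^{E,F}$. The only real care is bookkeeping. One must not double count the diagonal $x=y$ inside $b_0$, which is defined with $x\ne y$. One must also check that the factor $q^{2m-1}/(Lq^{m-1})$ indeed yields $q^m h$. I expect no genuine obstacle beyond this.
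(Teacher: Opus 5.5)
Your proof is correct and follows the paper's argument. You expand the shell sum in physical space, apply Lemma~\ref{lem:shell-eigenvalue} to each pair, sort the pairs into diagonal, off-diagonal isotropic, and nonzero-distance classes, and get realness from $\ol{K(a)}=K(a)$. The differences are only cosmetic: you treat the cross term first and recover the self terms by setting $F=E$ (implicitly with $T=\F_q^\times$), whereas the paper does the self term first, and you supply the $r\mapsto -r$ justification of realness that the paper leaves implicit.
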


\begin{proof}
We first prove the identity for \(A_s^E\). By the definition of the Fourier transform,
\[
    \sum_{\xi\in \Sigma_s}|\wh{\1_E}(\xi)|^2
    =
    \sum_{x,x'\in E}
    \sum_{\xi\in \Sigma_s}
    \chi(-(x-x')\cdot \xi).
\]
Applying Lemma~\ref{lem:shell-eigenvalue} with \(z=x-x'\), we get
\[
    \sum_{\xi\in \Sigma_s}
    \chi(-(x-x')\cdot \xi)
    =
    q^{2m-1}\1_{x=x'}
    +\eps_m q^{m-1}K(sQ(x-x')).
\]
Since \(K(0)=-1\), the diagonal pairs \(x=x'\) contribute
\[
    Lq^{2m-1}+\eps_m q^{m-1}LK(0)=L(q^{2m-1}-\eps_m q^{m-1}).
\]
The off-diagonal pairs with \(Q(x-x')=0\) and the remaining
off-diagonal pairs, grouped according to
\(u=Q(x-x')\in\F_q^\times\), contribute, respectively,
\[
    -\eps_m q^{m-1}L a_0^E
    \qquad\text{and}\qquad
    \eps_m q^{m-1}L
    \sum_{u\in\F_q^\times}K(su)a_u^E.
\]
Therefore,
\[
    \sum_{\xi\in \Sigma_s}|\wh{\1_E}(\xi)|^2
    =
    Lq^{2m-1}
    +\eps_m q^{m-1}L
    \left(
        -1-a_0^E+\sum_{u\in\F_q^\times}K(su)a_u^E
    \right).
\]
Dividing by \(Lq^{m-1}\) gives \eqref{eq:self-E}. The proof of
\eqref{eq:self-F} is identical.

It remains to prove the cross identity. By definition,
\[
    \sum_{\xi\in \Sigma_s}
    \wh{\1_E}(\xi)\ol{\wh{\1_F}(\xi)}
    =
    \sum_{x\in E}\sum_{y\in F}
    \sum_{\xi\in \Sigma_s}
    \chi(-(x-y)\cdot \xi).
\]
Applying Lemma~\ref{lem:shell-eigenvalue} with \(z=x-y\), the pairs
with \(x=y\) contribute
\[
    |E\cap F|(q^{2m-1}-\eps_m q^{m-1})
    =h^{E,F}L(q^{2m-1}-\eps_m q^{m-1}).
\]
The non-diagonal pairs with \(Q(x-y)=0\) and the pairs with
\(Q(x-y)=t\in T\) contribute, respectively,
\[
    -\eps_m q^{m-1}L b_0^{E,F}
    \qquad\text{and}\qquad
    \eps_m q^{m-1}L\sum_{t\in T}K(st)b_t^{E,F}.
\]
Hence
\[
    \sum_{\xi\in \Sigma_s}
    \wh{\1_E}(\xi)\ol{\wh{\1_F}(\xi)}
    =
    h^{E,F}Lq^{2m-1}
    +\eps_m q^{m-1}L
    \left(-h^{E,F}-b_0^{E,F}
        +\sum_{t\in T}K(st)b_t^{E,F}
    \right).
\]
Dividing by \(Lq^{m-1}\) gives \eqref{eq:cross}.

Finally, since $\ol{K(a)}=K(a)$, the sum \(K(a)\) is
real for every \(a\in\F_q\). Since \(h^{E,F},b_0^{E,F},b_t^{E,F}\) are real, the formula \eqref{eq:cross} shows that \(A_s^{E,F}\) is real.
\end{proof}

\section{The semidefinite transfer principle}\label{sec3}
It suffices to prove the balanced formulation, since one may pass to equal-size subsets of $E$ and $F$ without increasing the set of nonzero distances.

First suppose $T=\varnothing$. Then every pair $(x,y)\in E\times F$ has $Q(x-y)=0$. Lemma~\ref{lem:zero-incidence}, applied to $(E,F)$, gives
\[
    L^2\le \frac{L^2}{q}+C_mq^mL,
\]
and therefore, $L\le C_m' q^m$ after increasing the constant. Hence, we may assume that $T\ne\varnothing$.

We first isolate the only consequence of Conjecture~\ref{conj:kloosterman-L1} that will be used in the semidefinite averaging argument. This is the linear programming separation step: it produces one fixed system of coefficients in the frequency variable $s$, independent of $t$, whose Kloosterman averages have the sign needed for the argument.

\begin{claim}
\label{claim:kloosterman-separation}
Assume Conjecture~\ref{conj:kloosterman-L1}. Let $\varnothing\ne T\subseteq\mathbb{F}_q^\times$ satisfy $|T|\le \alpha(q-1)$. Then there exist coefficients
\[
    \sigma_s\in[-1,1],
    \qquad s\in\mathbb{F}_q^\times,
\]
such that
\[
    \frac{\eps_m}{q-1}
    \sum_{s\in\mathbb{F}_q^\times}\sigma_sK(st)
    \le -c_\alpha
    \qquad(t\in T).
\]
\end{claim}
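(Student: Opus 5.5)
The plan is to apply von Neumann's minimax theorem to the bilinear form
\[
    B(\sigma,\lambda):=\frac{1}{q-1}\sum_{s\in\F_q^\times}\sum_{t\in T}\sigma_s\lambda_tK(st),
    \qquad \sigma\in[-1,1]^{\F_q^\times},\ \lambda\in\Prob(T).
\]
Both domains are compact and convex, since $[-1,1]^{\F_q^\times}$ is a cube and $\Prob(T)$ is a simplex. The form $B$ is bilinear, and it is real-valued because each $K(a)$ is real, as noted at the end of the proof of Lemma~\ref{lem:gram-kloosterman-expansion}. The minimax theorem therefore gives
\[
    \min_{\sigma\in[-1,1]^{\F_q^\times}}\ \max_{\lambda\in\Prob(T)}\ \eps_m B(\sigma,\lambda)
    \;=\;
    \max_{\lambda\in\Prob(T)}\ \min_{\sigma\in[-1,1]^{\F_q^\times}}\ \eps_m B(\sigma,\lambda).
\]
Multiplying by $\eps_m=\pm1$ keeps the form bilinear, so the sign causes no difficulty.

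Next I compute the inner minimum on the right-hand side for a fixed $\lambda$. Write
\[
    g_\lambda(s):=\sum_{t\in T}\lambda_tK(st).
\]
The expression $\eps_m B(\sigma,\lambda)=\frac{1}{q-1}\sum_s\sigma_s\,\eps_m g_\lambda(s)$ is linear in $\sigma$ and separates across coordinates. It is minimized coordinatewise by the choice
\[
    \sigma_s=-\sgn\big(\eps_m g_\lambda(s)\big),
\]
with any value permitted when $g_\lambda(s)=0$. The minimum value is
\[
    -\frac{1}{q-1}\sum_s|g_\lambda(s)|=-\Phi_T(\lambda).
\]
Conjecture~\ref{conj:kloosterman-L1} applies because $\varnothing\ne T$ and $|T|\le\alpha(q-1)$, and it gives $\Phi_T(\lambda)\ge c_\alpha$ for every $\lambda\in\Prob(T)$. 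Consequently the right-hand side is at most $-c_\alpha$, and so is the left-hand side.

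The left-hand minimum is attained, by compactness and continuity, at some $\sigma^\ast\in[-1,1]^{\F_q^\times}$. For this choice,
\[
    \max_{\lambda\in\Prob(T)}\eps_m B(\sigma^\ast,\lambda)\le -c_\alpha .
\]
Specializing $\lambda$ to the point mass $\delta_t$ for each $t\in T$ gives
\[
    \frac{\eps_m}{q-1}\sum_s\sigma^\ast_sK(st)\le -c_\alpha
    \qquad(t\in T),
\]
which is the claimed inequality.

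The argument is elementary, so the only real point to check is the validity of the minimax exchange. I would either quote von Neumann's theorem for a bilinear form on a product of two compact convex polytopes, or phrase the step as LP duality. In the duality version, suppose no $\sigma$ works. Then the convex set $\{(\eps_m B(\sigma,\delta_t))_{t\in T}:\sigma\in[-1,1]^{\F_q^\times}\}$ is disjoint from the orthant $\{v\in\mathbb R^T: v_t\le -c_\alpha\ \text{for all } t\}$. Separating the two sets by a hyperplane produces nonnegative weights $\lambda$, which after normalization lie in $\Prob(T)$, with $\Phi_T(\lambda)<c_\alpha$. This contradicts the conjecture.
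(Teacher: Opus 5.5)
Your proposal is correct and follows the paper's proof: it applies the same minimax exchange to the bilinear pairing on the cube $[-1,1]^{\F_q^\times}$ and the simplex $\Prob(T)$, evaluates the inner optimization as $\Phi_T(\lambda)$, and specializes to point masses $\delta_t$ to get the uniform bound for each $t\in T$. The only cosmetic difference is that you build the sign $\eps_m$ into the form from the start, whereas the paper works with $\tau$ and sets $\sigma_s=-\eps_m\tau_s$ at the end. Your separating-hyperplane variant is also valid.
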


We postpone the proof of the claim and first show how it implies the balanced estimate. Choose coefficients $\sigma_s\in[-1,1]$ satisfying the conclusion of Claim~\ref{claim:kloosterman-separation}. We insert these coefficients into the positive semidefinite frequency-sphere Gram matrices.

For each $s\in\mathbb{F}_q^\times$, consider
\[
    H_s:=\frac1{q-1}
    \begin{pmatrix}
        1&\sigma_s\\
        \sigma_s&1
    \end{pmatrix}.
\]
Since $|\sigma_s|\le1$, the eigenvalues of $H_s$ are $\frac{1+\sigma_s}{q-1}$ and $\frac{1-\sigma_s}{q-1}$, both nonnegative. Thus, $H_s$ is positive semidefinite. By Lemma~\ref{lem:frequency-gram-psd}, $G_s(E,F)$ is also positive semidefinite. Hence
\[
    0\le \sum_{s\in\mathbb{F}_q^\times}\langle H_s,G_s(E,F)\rangle,
\]
where $\langle U,V\rangle=\Tr(U V)$. Since $A_s^{E,F}$ is real, this gives
\[
    0\le
    \frac1{q-1}
    \sum_{s\in\mathbb{F}_q^\times}
    \left(A_s^E+A_s^F+2\sigma_s A_s^{E,F}\right).
\]
We now substitute the Kloosterman expansions from Lemma~\ref{lem:gram-kloosterman-expansion} and estimate the resulting terms.

The nonzero self-distance terms are harmless. For every $v\in\mathbb{F}_q^\times$,
\[
    \sum_{s\in\mathbb{F}_q^\times}K(sv)=1.
\]
Indeed,
\[
    \sum_{s\in\mathbb{F}_q^\times}K(sv)
    =\sum_{r\in\mathbb{F}_q^\times}\chi(r)
      \sum_{s\in\mathbb{F}_q^\times}\chi(sv/r)
    =-\sum_{r\in\mathbb{F}_q^\times}\chi(r)=1.
\]
Therefore, the total contribution of the nonzero self-distance terms from $A_s^E$ and $A_s^F$ is bounded in absolute value by
\[
    \frac{2L}{q-1}.
\]
The nonzero cross-distance terms give the main negative contribution.  By Claim~\ref{claim:kloosterman-separation} and the nonnegativity of the multiplicities $b_t$,
\[
    \frac{2\eps_m}{q-1}
    \sum_{t\in T}b_t^{E,F}
    \sum_{s\in\mathbb{F}_q^\times}\sigma_sK(st)
    \le -2c_\alpha\sum_{t\in T}b_t^{E,F}.
\]
It remains to control the constant terms and the zero-distance terms.  Lemma~\ref{lem:zero-incidence}, applied to $(E,E)$, $(F,F)$, and $(E,F)$, gives
\[
    1+a_0^E\le \frac{L}{q}+C_mq^m,
    \qquad
    1+a_0^F\le \frac{L}{q}+C_mq^m, \qquad h^{E,F}+b_0^{E,F}
    \le \frac{L}{q}+C_m q^m.
\]
Also \(0\le h^{E,F}\le1\) and $|\sigma_s|\le1$. Hence all constant and zero-distance contributions are bounded in absolute value by
\[
    C_m\left(q^m+\frac Lq\right),
\]
after increasing $C_m$ if necessary.

Since $\Delta_Q^\times(E,F)\subseteq T$, the normalized cross-pair count decomposes as
\[
    L=h^{E,F}+b_0^{E,F}
    +\sum_{t\in T}b_t^{E,F}.
\]
Thus,
\[
    \sum_{t\in T} b_t^{E,F}
    =L-h^{E,F}-b_0^{E,F}
    \ge L-C_mq^m-\frac Lq.
\]
Combining the preceding estimates with the positive semidefinite inequality yields
\[
    0\le
    C_m\left(q^m+\frac Lq\right)
    +\frac{2L}{q-1}
    -2c_\alpha\left(L-C_mq^m-\frac Lq\right).
\]
For all sufficiently large \(q\), depending only on \(m\) and \(\alpha\), the terms involving \(\frac{L}{q}\) and \(\frac{L}{q-1}\) can be absorbed into the negative term, and we obtain
\[
    L\le C_{m,\alpha}q^m.
\]
After enlarging $C_{m,\alpha}$ to absorb the finitely many remaining odd prime powers, the balanced estimate follows.

It remains to prove Claim~\ref{claim:kloosterman-separation}.

\paragraph{Proof of Claim~\ref{claim:kloosterman-separation}}
Let
\[
    \mathcal C
    :=
    \left\{(\tau_s)_{s\in\mathbb{F}_q^\times}:
    -1\le \tau_s\le1\text{ for every }s\in\mathbb{F}_q^\times
    \right\}.
\]
Equivalently,
\[
    \mathcal C=
    \underbrace{[-1,1]\times\cdots\times[-1,1]}_{q-1\text{ times}}.
\]
For $\lambda\in\Prob(T)$ and $\tau\in\mathcal C$, define
\[
    B(\lambda,\tau)
    :=
    \frac1{q-1}
    \sum_{s\in\mathbb{F}_q^\times}
    \tau_s
    \sum_{t\in T}\lambda_tK(st).
\]

For fixed $\lambda\in\Prob(T)$, put
\[
    R_s(\lambda):=\sum_{t\in T}\lambda_tK(st).
\]
Since the Kloosterman sum $K$ is real-valued, each $R_s(\lambda)$ is real. Hence
\[
    \sup_{\tau\in\mathcal C}B(\lambda,\tau)
    =
    \frac1{q-1}
    \sum_{s\in\mathbb{F}_q^\times}|R_s(\lambda)|
    =
    \frac1{q-1}
    \sum_{s\in\mathbb{F}_q^\times}
    \left|\sum_{t\in T}\lambda_tK(st)\right|.
\]
Indeed, for every admissible $\tau$, one has
\[
    \tau_sR_s(\lambda)\le |R_s(\lambda)|
\]
term by term, while equality is obtained by choosing
$\tau_s=\sgn(R_s(\lambda))$, with any value in $[-1,1]$ when $R_s(\lambda)=0$.

By Conjecture~\ref{conj:kloosterman-L1}, the last quantity is at least $c_\alpha$, uniformly in $\lambda$.  Therefore,
\[
    \inf_{\lambda\in\Prob(T)}
    \sup_{\tau\in\mathcal C}
    B(\lambda,\tau)
    \ge c_\alpha.
\]
We now use the finite-dimensional von Neumann's minimax theorem, see \cite{Fan53, Nikaido54, Sion58}. If $X$ and $Y$ are compact convex subsets of finite-dimensional real vector spaces, and if $\Phi:X\times Y\to\mathbb R$ is continuous and affine in each variable separately, then
\[
    \inf_{x\in X}\sup_{y\in Y}\Phi(x,y)
    =
    \sup_{y\in Y}\inf_{x\in X}\Phi(x,y).
\]
We apply this with
\[
    X=\Prob(T),
    \qquad
    Y=\mathcal C,
    \qquad
    \Phi=B.
\]
Here, \(\Prob(T)\) denotes the simplex defined in \eqref{definition-simplex}, which is compact and convex.
 The set $\mathcal C$ is a product of $q-1$ closed intervals, hence is compact and convex. Finally, $B(\lambda,\tau)$ is bilinear, because it is a finite sum of terms of the form $\lambda_t\tau_sK(st)$ with $K(st)$ fixed. Thus, minimax gives
\[
    \inf_{\lambda\in\Prob(T)}
    \sup_{\tau\in\mathcal C}
    B(\lambda,\tau)
    =
    \sup_{\tau\in\mathcal C}
    \inf_{\lambda\in\Prob(T)}
    B(\lambda,\tau).
\]
Consequently,
\[
    \sup_{\tau\in\mathcal C}
    \inf_{\lambda\in\Prob(T)}
    B(\lambda,\tau)
    \ge c_\alpha.
\]
The supremum is attained, since $\mathcal C$ is compact and the function
\[
    \tau\mapsto \inf_{\lambda\in\Prob(T)}B(\lambda,\tau)
\]
is continuous; indeed, because $B$ is linear in $\lambda$, this infimum is the minimum of the finitely many continuous functions obtained by taking $\lambda=\delta_t$, $t\in T$. Hence there exists one choice $\tau=(\tau_s)_{s\in\mathbb{F}_q^\times}\in\mathcal C$ such that
\[
    B(\lambda,\tau)\ge c_\alpha
    \qquad\text{for every }\lambda\in\Prob(T).
\]
Taking $\lambda=\delta_t$, the point mass at $t\in T$, gives
\[
    \frac1{q-1}
    \sum_{s\in\mathbb{F}_q^\times}\tau_sK(st)
    \ge c_\alpha
    \qquad(t\in T).
\]
Finally, define
\[
    \sigma_s:=-\eps_m\tau_s,
    \qquad s\in\mathbb{F}_q^\times.
\]
Since $\eps_m=\pm1$, we still have $\sigma_s\in[-1,1]$. Moreover, for every $t\in T$,
\[
    \frac{\eps_m}{q-1}
    \sum_{s\in\mathbb{F}_q^\times}\sigma_sK(st)
    =
    -\frac1{q-1}
    \sum_{s\in\mathbb{F}_q^\times}\tau_sK(st)
    \le -c_\alpha.
\]
This proves the claim, and completes the proof of the theorem.

\begin{proof}[Proof of Proposition~\ref{prop:quantitative-transfer}]
Under the hypothesis of Proposition~\ref{prop:quantitative-transfer},
the same minimax argument gives coefficients
\(\sigma_s\in[-1,1]\) such that
\[
    \frac{\eps_m}{q-1}
    \sum_{s\in\mathbb F_q^\times}\sigma_sK(st)
    \le -c_{\alpha,\theta}q^{-\theta}
    \qquad(t\in T).
\]
Repeating the positive semidefinite averaging argument for balanced
sets of size \(L\) therefore gives
\[
    0\le
    C_m\left(q^m+\frac Lq\right)
    +\frac{2L}{q-1}
    -2c_{\alpha,\theta}q^{-\theta}
      \left(L-C_mq^m-\frac Lq\right).
\]
Since \(\theta<1\), the terms \(L/q\) and \(L/(q-1)\) can be absorbed
into the negative \(Lq^{-\theta}\) term once \(q\) is sufficiently
large in terms of \(m,\alpha\), and \(\theta\). It follows that
\[
    L\le C_{m,\alpha,\theta}q^{m+\theta}.
\]
Enlarging the constant handles the finitely many remaining odd prime
powers, and passing to equal-size subsets proves the stated
unbalanced formulation.
\end{proof}

 \begin{remark}
The restriction to \(d=2m\) is not merely notational. It enters at the
point where the Fourier transform of a quadratic shell is evaluated.  In even dimension the quadratic Gauss-sum factor contains \(\eta(\rho)^{2m}=1\), so the nonzero part of the shell transform is governed by the classical Kloosterman sum
\[
        K(a)=\sum_{r\in \F_q^\times}\chi\bigg(r+\frac{a}{r} \bigg).
\]
Thus, Conjecture~\ref{conj:kloosterman-L1} is exactly the analytic input
needed by the linear programming argument in this paper.

In odd dimension the same calculation does not give the same Kloosterman sum. The extra factor \(\eta(\rho)\) survives in the Gauss-sum computation, and the corresponding shell eigenvalues involve a Sali\'e-type sum rather than the classical Kloosterman sum.  Consequently Conjecture~\ref{conj:kloosterman-L1}
has no direct implication for the odd-dimensional distance problem.

This distinction is also forced by known examples. In odd dimensions there are arithmetic constructions of sets \(E\subset \F_q^d\) with
\[
        |E|\sim c q^{\frac{d+1}{2}}
        \qquad\text{and}\qquad
        |\Delta(E)|<c q
\]
for arbitrarily small fixed \(c>0\); see
\cite{HIKR11}. Since these sets are much larger than
\(q^{\frac{d}{2}}\), they rule out an odd-dimensional analogue of the present conclusion at the even-dimensional scale. This is consistent with the standard Fourier threshold \(q^{\frac{d+1}{2}}\) for obtaining all distances in the general odd-dimensional problem.
\end{remark}

\section{Discussions on Conjecture \ref{conj:kloosterman-L1}}\label{sec4}

Throughout this section, write
\[
    G:=\F_q^\times,
    \qquad N:=|G|=q-1,
\]
and, for a probability measure \(\lambda\) on a subset of \(G\), extend
\(\lambda\) by zero to all of \(G\) and put
\[
    R_\lambda(s):=\sum_{t\in G}\lambda_tK(st),
    \qquad
    \Phi_T(\lambda)=\frac1N\sum_{s\in G}|R_\lambda(s)|.
\]
We shall use the elementary autocorrelation identity
\begin{equation}
\label{eq:K-autocorrelation-discussion}
    \sum_{s\in G}K(st)K(su)
    =
    \begin{cases}
        q^2-q-1, & t=u,\\
        -q-1, & t\ne u,
    \end{cases}
\end{equation}
for \(t,u\in G\). Indeed, after the change of variables \(a=st\) and
\(v=u/t\), the left-hand side is
\[
    q\sum_{r\in G}\chi((1-v)r)-1,
\]
which gives the two cases above. Consequently, for every probability measure
\(\lambda\),
\begin{equation}
\label{eq:second-moment-discussion}
    \sum_{s\in G} R_\lambda(s)^2
    =q^2\sum_{t\in G}\lambda_t^2-q-1.
\end{equation}

\subsection{Examples supporting the conjecture}

The next two propositions give two simple unconditional regimes: concentrated measures and structured positive density supports. They are meant as tests of the conjecture, not as substitutes for the full uniform statement over all measures.

\begin{proposition}
\label{prop:discussion-concentrated-measures}
Fix \(A\ge1\). If \(\lambda\in\Prob(T)\) satisfies
\[
    |\operatorname{supp}\lambda|\le A q^{\frac{1}{2}},
\]
then, for all sufficiently large \(q\) depending only on \(A\),
\[
    \Phi_T(\lambda)\ge \frac1{4A}.
\]
In particular, point masses and probability measures supported on a bounded number of points satisfy the predicted lower bound.
\end{proposition}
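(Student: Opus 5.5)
The plan is to use the elementary $L^1$--$L^2$--$L^\infty$ interpolation
\[
    \sum_{s\in G}|R_\lambda(s)|\;\ge\;\frac{\sum_{s\in G}R_\lambda(s)^2}{\max_{s\in G}|R_\lambda(s)|}.
\]
The numerator comes from the exact second-moment identity \eqref{eq:second-moment-discussion}. The denominator comes from the Weil bound for Kloosterman sums. The support hypothesis makes the second moment large, of order $q^{3/2}/A$, while each value $|R_\lambda(s)|$ is only of order $q^{1/2}$.

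\emph{Step 1 (upper bound on the sup).} For $s,t\in G$ we have $st\neq0$. The Weil bound $|K(a)|\le 2\sqrt q$ for $a\in\F_q^\times$, valid over any finite field $\F_q$, then gives
\[
    |R_\lambda(s)|\le\sum_t\lambda_t|K(st)|\le 2\sqrt q
\]
for every $s\in G$, since $\lambda$ is a probability measure.

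\emph{Step 2 (lower bound on the second moment).} Put $n=|\operatorname{supp}\lambda|$. By Cauchy--Schwarz, $\sum_t\lambda_t^2\ge 1/n\ge A^{-1}q^{-1/2}$. Inserting this into \eqref{eq:second-moment-discussion} yields
\[
    \sum_{s\in G}R_\lambda(s)^2\ \ge\ \frac{q^{3/2}}{A}-q-1.
\]

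\emph{Step 3 (combine).} From the interpolation inequality,
\[
    \Phi_T(\lambda)\ \ge\ \frac{1}{q-1}\cdot\frac{q^{3/2}/A-q-1}{2\sqrt q}
    \ \ge\ \frac{1}{2A}-\frac{q+1}{2\sqrt q\,(q-1)}.
\]
The error term is $O(q^{-1/2})$. Hence it is at most $\frac1{4A}$ once $q$ is large in terms of $A$, which gives $\Phi_T(\lambda)\ge\frac1{4A}$.

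There is no real obstacle here; the only external input is the Weil bound, which is standard. The one point to check is that the constant $2$ in the Weil bound, together with the lower-order terms $-q-1$, still leaves room for $\frac1{4A}$ rather than $\frac1{2A}$. The estimate above shows this margin comfortably absorbs the lower-order terms for large $q$. The final remark about point masses and bounded supports is then immediate, since such supports satisfy $n\le Aq^{1/2}$ with $A=1$ once $q$ is large.
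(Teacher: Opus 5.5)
Your proof is correct and follows the paper's argument: Cauchy--Schwarz gives $\sum_t\lambda_t^2\ge A^{-1}q^{-1/2}$, which you feed into the exact second-moment identity, then bound $|R_\lambda(s)|\le 2q^{1/2}$ by Weil and conclude with $\sum_s|R_\lambda(s)|\ge \sum_s R_\lambda(s)^2/\max_s|R_\lambda(s)|$. The only difference is cosmetic: the paper first absorbs the $-q-1$ term into $\frac{1}{2A}q^{3/2}$ for large $q$, whereas you carry it to the end as an explicit $O(q^{-1/2})$ error.
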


\begin{proof}
By Cauchy's inequality,
\[
    \sum_{t\in G}\lambda_t^2
    \ge |\operatorname{supp}\lambda|^{-1}
    \ge A^{-1}q^{-\frac{1}{2}}.
\]
Using \eqref{eq:second-moment-discussion}, we get
\[
    \sum_{s\in G}R_\lambda(s)^2
    \ge A^{-1}q^{\frac{3}{2}}-q-1
    \ge \frac1{2A}q^{\frac{3}{2}}
\]
for all sufficiently large \(q\). The Weil bound for Kloosterman sums~\cite{Weil48} gives
\(|R_\lambda(s)|\le2q^{\frac{1}{2}}\). Therefore
\[
    \Phi_T(\lambda)
    =\frac1N\sum_{s\in G}|R_\lambda(s)|
    \ge
    \frac{\sum\limits_{s\in G} R_\lambda(s)^2}{2q^{\frac{1}{2}}N}
    \ge \frac1{4A}.
\]
\end{proof}

\begin{proposition}
\label{prop:discussion-coset-unions}
Let \(H\le G\) be a multiplicative subgroup of index \(d\), and let
\(U\subseteq G\) be a union of exactly \(r\) cosets of \(H\), where
\(1\le r<d\). Let \(\lambda=|U|^{-1}\1_U\). Then
\[
    \Phi_U(\lambda)
    \ge \sqrt{\frac{d-r}{rd}}.
\]
In particular, if \(\frac{r}{d}\le\alpha<\frac{1}{2}\) and \(r\le R\), then
\(|U|\le\alpha(q-1)\) and
\[
    \Phi_U(\lambda)\ge \sqrt{\frac{1-\alpha}{R}}.
\]
\end{proposition}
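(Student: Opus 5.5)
The plan is to use the $H$-invariance of $U$: the function $R_\lambda$ is constant on cosets of $H$. Then the $L^1$ average over $G$ becomes an $\ell^1$ norm of a vector of length $d$, which we bound below by its $\ell^2$ norm. The $\ell^2$ norm is computed exactly by the second-moment identity \eqref{eq:second-moment-discussion}.

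\emph{Step 1 (coset invariance).} Since $U$ is a union of cosets of $H$, we have $hU=U$ for every $h\in H$. Substituting $t\mapsto th^{-1}$ gives
\[
R_\lambda(sh)=\frac1{|U|}\sum_{t\in U}K(sht)=\frac1{|U|}\sum_{t\in U}K(st)=R_\lambda(s).
\]
So $R_\lambda$ takes a constant value $R_j$ on the $j$-th coset $C_j$ of $H$, for $j=1,\dots,d$, and each coset has size $N/d$. Hence
\[
\Phi_U(\lambda)=\frac1d\sum_{j=1}^d|R_j|,
\qquad
\sum_{s\in G}R_\lambda(s)^2=\frac Nd\sum_{j=1}^dR_j^2 .
\]

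\emph{Step 2 ($\ell^1\ge\ell^2$).} For any finite real vector, $\sum_j|R_j|\ge(\sum_jR_j^2)^{1/2}$. Therefore
\[
\Phi_U(\lambda)\ge\frac1d\Bigl(\sum_jR_j^2\Bigr)^{1/2}=\Bigl(\frac1d\cdot\frac1N\sum_{s\in G}R_\lambda(s)^2\Bigr)^{1/2}.
\]
It remains to show that $\frac1N\sum_sR_\lambda(s)^2\ge\frac{d-r}{r}$.

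\emph{Step 3 (exact second moment).} Here $|U|=rN/d$ and $\sum_t\lambda_t^2=1/|U|=d/(rN)$. By \eqref{eq:second-moment-discussion},
\[
\frac1N\sum_sR_\lambda(s)^2=\frac{q^2d}{r(q-1)^2}-\frac{q+1}{q-1}.
\]
The inequality $\ge \frac dr-1$ is equivalent to
\[
\frac dr\Bigl(\frac{q^2}{(q-1)^2}-1\Bigr)\ge\frac{q+1}{q-1}-1,
\quad\text{i.e.}\quad
\frac dr\cdot\frac{2q-1}{(q-1)^2}\ge\frac{2}{q-1},
\]
which reduces to $d(2q-1)\ge 2r(q-1)$. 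This holds because $d>r$ and $2q-1>2(q-1)$. Combining with Step 2 gives $\Phi_U(\lambda)\ge\sqrt{(d-r)/(rd)}$.

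\emph{Step 4 (the particular case).} If $r/d\le\alpha$, then $|U|=(r/d)(q-1)\le\alpha(q-1)$. Also $\frac{d-r}{rd}=\frac{1-r/d}{r}\ge\frac{1-\alpha}{R}$ when $r\le R$, which gives the second bound.

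The only point needing care is Step 3, namely that the lower-order terms $-q-1$ and the factor $q^2/(q-1)^2$ combine favourably with no loss. The computation above shows they do, so I expect no genuine obstacle.
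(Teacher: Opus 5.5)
Your proposal is correct and is essentially the paper's proof: $H$-invariance of $R_\lambda$, the $\ell^1\ge\ell^2$ comparison on the $d$ coset values, and the exact second moment \eqref{eq:second-moment-discussion}. The only difference is cosmetic: the paper rewrites the second moment as $N^{-2}\bigl((d/r-1)q^2+1\bigr)$ and uses $q>N$, whereas you verify the equivalent inequality $d(2q-1)\ge 2r(q-1)$ directly.
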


\begin{proof}
Since \(U\) is a union of \(H\)-cosets, we have \(hU=U\) for every \(h\in H\).
Thus \(R_\lambda(sh)=R_\lambda(s)\). Hence, \(R_\lambda\) is constant on the \(d\) cosets of \(H\); call the corresponding values \(c_1,\ldots,c_d\). Using the elementary inequality \(\|c\|_1\ge \|c\|_2\), we obtain
\[
    \Phi_U(\lambda)=\frac1d\sum_{j=1}^d |c_j|
    \ge \frac1d\left(\sum_{j=1}^dc_j^2\right)^{\frac{1}{2}}
    =\left(\frac1d\cdot\frac1N\sum_{s\in G}R_\lambda(s)^2\right)^{\frac{1}{2}}.
\]
Since \(|U|= \frac{r N}{d}\), we have \(\sum\limits_{t\in G}\lambda_t^2= \frac{1}{|U|}= \frac{d}{r N}\). By
\eqref{eq:second-moment-discussion},
\[
    \frac1N\sum_{s\in G}R_\lambda(s)^2
    =\frac1N \bigg( q^2 \frac{d}{rN}-q-1 \bigg)
    =\frac{1}{N^2} \bigg( \bigg( \frac{d}{r}-1 \bigg)q^2+1 \bigg).
\]
Therefore
\[
    \Phi_U(\lambda)
    \ge
    \frac{1}{\sqrt d\,N} \sqrt{\bigg( \frac{d}{r}-1 \bigg)q^2+1}
    \ge \sqrt{\frac{d-r}{rd}}.
\]
The final assertion follows from \(|U|=\frac{r}{d} N \le\alpha N\) and 
\[
\frac{d-r}{rd}=\frac{1}{r} \bigg( 1-\frac{r}{d} \bigg) \geq \frac{1-\alpha}{R}. \]
\end{proof}

\subsection{Endpoint and near full support obstructions}

The next examples explain why the density restriction in the conjecture cannot be pushed to the endpoint. More precisely, they show failure at \(\alpha=1\) and failure for moving support densities tending to \(1\).  They do not, by themselves, disprove any fixed-density statement with \(\alpha<1\).

\begin{proposition}
\label{prop:discussion-full-support}
Let \(T=G\), and let \(\lambda\) be the uniform probability measure on \(G\).
Then
\[
    R_\lambda(s)=\frac1N\qquad(s\in G),
    \qquad
    \Phi_G(\lambda)=\frac1N.
\]
In particular, no positive lower bound independent of \(q\) can hold when \(|T|=q-1\) is allowed.
\end{proposition}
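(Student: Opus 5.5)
The plan is to compute $R_\lambda(s)$ directly for the uniform measure on all of $G$. By definition,
\[
    R_\lambda(s)=\frac1N\sum_{t\in G}K(st).
\]
Since $s\in G$ is invertible, the map $t\mapsto st$ is a bijection of $G$. Therefore
\[
    R_\lambda(s)=\frac1N\sum_{v\in G}K(v),
\]
which does not depend on $s$. It then suffices to evaluate the single sum $\sum_{v\in G}K(v)$.

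For this we use the identity $\sum_{s\in\F_q^\times}K(sv)=1$, valid for every $v\in\F_q^\times$, which was established in Section~\ref{sec3}. Taking $v=1$ gives $\sum_{s\in G}K(s)=1$. If one wants a self-contained check, expand
\[
    \sum_{v\in G}K(v)=\sum_{r\in G}\chi(r)\sum_{v\in G}\chi(v/r).
\]
For each fixed $r$, the inner sum is a complete additive character sum over $\F_q$ with the $v=0$ term removed, so it equals $-1$. Hence $\sum_{v\in G}K(v)=-\sum_{r\in G}\chi(r)=1$, and so $R_\lambda(s)=1/N$ for every $s\in G$.

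Since $R_\lambda(s)=1/N>0$ for all $s$, the absolute values are harmless, and
\[
    \Phi_G(\lambda)=\frac1N\sum_{s\in G}\frac1N=\frac1N=\frac1{q-1}.
\]
This tends to $0$ as $q\to\infty$, which gives the final assertion: no lower bound $c>0$ independent of $q$ can hold once $|T|=q-1$ is permitted. The uniform measure on $G$ is admissible in that setting, so it is a counterexample for every $q$ with $1/(q-1)<c$.

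There is no real obstacle here. The only points needing care are that the dilation $t\mapsto st$ preserves $G$, and that the character sum is correctly evaluated as $+1$ rather than $-1$ (the two minus signs cancel).
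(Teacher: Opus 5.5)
Your proposal is correct and follows the paper's proof: you use the dilation $t\mapsto st$ to reduce $R_\lambda(s)$ to $\frac1N\sum_{v\in G}K(v)$, then expand the Kloosterman sum to get $(-1)(-1)=1$. The conclusion $\Phi_G(\lambda)=\frac1N\to0$ then follows exactly as in the paper.
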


\begin{proof}
For fixed \(s\in G\), multiplication by \(s\) permutes \(G\), and hence
\[
    R_\lambda(s)=\frac1N\sum_{a\in G}K(a).
\]
Expanding the Kloosterman sum gives
\[
    \sum_{a\in G}K(a)
    =\sum_{r\in G}\chi(r)\sum_{a\in G}\chi\bigg(\frac{a}{r} \bigg)
    =(-1)(-1)=1.
\]
Thus, \(R_\lambda(s)=\frac{1}{N}\) for all \(s\), and the formula for \(\Phi_G(\lambda)\)
follows.
\end{proof}

\begin{proposition}
\label{prop:discussion-near-full}
Let \(q\to\infty\) through odd prime powers, put \(N=q-1\), and let
\(r=r(q)\) satisfy
\[
    0\le r\le \frac N2,
    \qquad
    \frac rN\to0.
\]
If \(T\subseteq G\) has \(|T|=N-r\) and \(\lambda=|T|^{-1}\1_T\), then
\[
    \frac{|T|}{q-1}=1-\frac rN\to1
\]
and
\[
    \Phi_T(\lambda)
    \le
    \left(
        \frac{q^2}{N(N-r)}-\frac{q^2-1}{N^2}
    \right)^{\frac{1}{2}}
    =o(1).
\]
\end{proposition}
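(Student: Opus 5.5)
The estimate will come from Cauchy--Schwarz in the \(s\)-variable combined with the exact second-moment identity \eqref{eq:second-moment-discussion}. Applied over the \(N\) values of \(s\), Cauchy--Schwarz gives
\[
    \Phi_T(\lambda)=\frac1N\sum_{s\in G}|R_\lambda(s)|
    \le\left(\frac1N\sum_{s\in G}R_\lambda(s)^2\right)^{\frac12}.
\]
It therefore suffices to evaluate the right-hand side exactly for the uniform measure \(\lambda=|T|^{-1}\1_T\).

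For the uniform measure we have \(\sum_t\lambda_t^2=1/|T|=1/(N-r)\). Substituting into \eqref{eq:second-moment-discussion} gives
\[
    \frac1N\sum_{s}R_\lambda(s)^2=\frac{q^2}{N(N-r)}-\frac{q+1}{N}.
\]
Since \(N=q-1\), we have \((q+1)/N=(q+1)(q-1)/N^2=(q^2-1)/N^2\). This is exactly the displayed expression, and the inequality follows. The density statement \(|T|/(q-1)=1-r/N\to1\) is immediate from \(|T|=N-r\).

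It remains to show that the bracket is \(o(1)\). Writing \(q=N+1\), the first term is
\[
    \frac{q^2}{N^2}\cdot\frac{1}{1-r/N}=\frac{q^2}{N^2}\left(1+\frac{r}{N-r}\right).
\]
Subtracting \((q^2-1)/N^2\) leaves
\[
    \frac1{N^2}+\frac{q^2}{N^2}\cdot\frac{r}{N-r}.
\]
Now \(q^2/N^2\to1\), and the hypothesis \(r\le N/2\) gives \(N-r\ge N/2\), so \(r/(N-r)\le 2r/N\to0\). Hence the bracket tends to \(0\), and so does its square root.

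There is no real obstacle. The only points needing care are nonnegativity of the bracket, which holds automatically because it equals an average of squares, and the rewriting of \((q+1)/N\) in the form \((q^2-1)/N^2\), which matches the stated expression.
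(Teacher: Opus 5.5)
Your proof is correct and follows the same route as the paper: Cauchy--Schwarz in $s$, then the second-moment identity \eqref{eq:second-moment-discussion} with $\sum_t\lambda_t^2=1/(N-r)$, and then $(q+1)/N=(q^2-1)/N^2$. Your simplification of the bracket to $\frac{1}{N^2}+\frac{q^2}{N^2}\cdot\frac{r}{N-r}$ just makes explicit the limit the paper asserts after substituting $q=N+1$.
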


\begin{proof}
By Cauchy's inequality and \eqref{eq:second-moment-discussion},
\[
\begin{aligned}
    \Phi_T(\lambda)
    &\le \left(\frac1N\sum_{s\in G}R_\lambda(s)^2\right)^{\frac{1}{2}}  =\left(
        \frac{q^2}{N(N-r)}-\frac{q^2-1}{N^2}
      \right)^{\frac{1}{2}},
\end{aligned}
\]
because \(\sum\limits_t\lambda_t^2=\frac{1}{N-r}\).  Since \(q=N+1\), the expression inside the square root is
\[
    \frac{(N+1)^2}{N(N-r)}-\frac{N^2+2N}{N^2},
\]
which tends to \(0\) whenever \(\frac{r}{N}\to0\).
\end{proof}

\paragraph{The role of the density \(\boldsymbol{\frac12}\).}
The cutoff \(\frac12\) has different logical status in the conjecture
and in the cubic argument. In
Proposition~\ref{prop:discussion-theta-third}, it enters concretely
through
\[
    d_\alpha=\frac{(1-\alpha)(1-2\alpha)}{\alpha^2},
\]
which is positive exactly when \(\alpha<\frac12\); at
\(\alpha=\frac12\), the one-sided cubic-moment inequality used below
has no positive main term. Thus \(\alpha<\frac12\) is essential for
the present cubic proof. By contrast, the examples in this subsection
show failure only at full support or along densities tending to \(1\);
they do not show that a constant-scale lower bound fails for a fixed
\(\alpha\in(\frac12,1)\). The transfer principle,
Proposition~\ref{prop:quantitative-transfer}, itself works for every
fixed \(\alpha<1\). We formulate
Conjecture~\ref{conj:kloosterman-L1} in the range
\(\alpha<\frac12\) relevant to the main conditional theorem and make
no claim here, either positive or negative, about its fixed-density
extension beyond that range.

\subsection{A weaker lower bound holds at \texorpdfstring{$\theta=\frac{1}{2}$}{theta = 1/2}}
\label{subsec:discussion-theta-half}

Although Conjecture~\ref{conj:kloosterman-L1} asks for a constant lower bound when \(|T|\le\alpha(q-1)\) with \(\alpha<\frac{1}{2}\), a weaker estimate is immediate from the second moment.

\begin{proposition}
\label{prop:discussion-theta-half}
Fix \(0<\alpha<1\). Then, for every odd prime power \(q\), every nonempty
\(T\subseteq G\) with \(|T|\le\alpha(q-1)\), and every
\(\lambda\in\Prob(T)\),
\[
    \frac1{q-1}\sum_{s\in\F_q^\times}
    \left|\sum_{t\in T}\lambda_tK(st)\right|
    \ge
    \frac{1-\alpha}{2\alpha}\,q^{-\frac{1}{2}}.
\]
Thus, the weakened form of the conjecture holds with \(\theta=\frac{1}{2}\).
\end{proposition}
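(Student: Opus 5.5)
The plan mirrors the proof of Proposition~\ref{prop:discussion-concentrated-measures}: combine the exact second-moment identity \eqref{eq:second-moment-discussion} with the pointwise Weil bound, but now use only the support-size information \(|T|\le\alpha(q-1)\) instead of a \(q^{1/2}\)-size support.

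\textbf{Step 1: lower-bound the second moment.} Since \(\lambda\in\Prob(T)\), Cauchy's inequality gives \(\sum_t\lambda_t^2\ge |T|^{-1}\ge (\alpha N)^{-1}\). Then \eqref{eq:second-moment-discussion} yields
\[
    \sum_{s\in G}R_\lambda(s)^2\ge \frac{q^2}{\alpha N}-q-1 .
\]
Writing \(q=N+1\), we have \(q^2/N\ge q+1\) (indeed \(q^2\ge q^2-1=N(q+1)\)). Hence the right-hand side is at least
\[
    \Bigl(\frac1\alpha-1\Bigr)\frac{q^2}{N}+\Bigl(\frac{q^2}{N}-q-1\Bigr)\ge \frac{1-\alpha}{\alpha}\,\frac{q^2}{N}.
\]

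\textbf{Step 2: the sup bound.} By Weil's bound, \(|K(a)|\le 2q^{1/2}\) for \(a\ne0\), and all arguments \(st\) are nonzero. Since \(\lambda\) is a probability measure, \(|R_\lambda(s)|\le 2q^{1/2}\) for every \(s\).

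\textbf{Step 3: combine.} Using \(|R|\ge R^2/\|R\|_\infty\),
\[
    \Phi_T(\lambda)\ge \frac{1}{N}\cdot\frac{\sum_s R_\lambda(s)^2}{2q^{1/2}}
    \ge \frac{1-\alpha}{2\alpha}\,\frac{q^{2}}{N^2 q^{1/2}}
    \ge \frac{1-\alpha}{2\alpha}\,q^{-1/2},
\]
because \(q^2/N^2\ge1\). This gives the claimed bound for every \(q\), with no largeness assumption.

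The only delicate point is Step 1: one must check that the \(-q-1\) term is absorbed exactly, rather than only asymptotically, so that the inequality holds uniformly in all odd prime powers. The elementary inequality \(q^2\ge N(q+1)\) handles this.
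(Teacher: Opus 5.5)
Your proposal is correct and follows the paper's proof essentially step for step. You use Cauchy--Schwarz to get $\sum_t\lambda_t^2\ge(\alpha N)^{-1}$, then the second-moment identity \eqref{eq:second-moment-discussion}, the Weil bound $|R_\lambda(s)|\le 2q^{1/2}$, and $|R|\ge R^2/\|R\|_\infty$. The only cosmetic difference is how the $-q-1$ term is absorbed: you use $q^2\ge N(q+1)$, while the paper writes $\frac{q^2}{\alpha N}-q-1=\frac{(1-\alpha)q^2+\alpha}{\alpha N}$. Both routes give the bound uniformly in $q$.
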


\begin{proof}
By Cauchy's inequality,
\[
    \sum_{t\in G}\lambda_t^2\ge \frac1{|T|}\ge \frac1{\alpha N}.
\]
Using \eqref{eq:second-moment-discussion}, we obtain
\[
    \sum_{s\in G}R_\lambda(s)^2
    \ge \frac{q^2}{\alpha N}-q-1
    =\frac{(1-\alpha)q^2+\alpha}{\alpha N}
    \ge \frac{1-\alpha}{\alpha}q.
\]
On the other hand, the Weil bound for Kloosterman sums~\cite{Weil48} gives
\(|K(a)|\le2q^{\frac{1}{2}}\) for \(a\ne0\).  Since \(s,t\in G\), this implies
\(|R_\lambda(s)|\le2q^{\frac{1}{2}}\) for all \(s\in G\). Therefore
\[
\begin{aligned}
    \Phi_T(\lambda)
    &=\frac1N\sum_{s\in G}|R_\lambda(s)|  \\
    &\ge \frac{\sum\limits_{s\in G}R_\lambda(s)^2}
            {N\max\limits_{s\in G}|R_\lambda(s)|}  \\
    &\ge \frac{1-\alpha}{2\alpha}\frac{q}{Nq^{\frac{1}{2}}}
     \ge \frac{1-\alpha}{2\alpha}q^{-\frac{1}{2}}.
\end{aligned}
\]
This proves the claim.
\end{proof}

Combining Proposition~\ref{prop:discussion-theta-half} with
Proposition~\ref{prop:quantitative-transfer} at
\(\theta=\frac12\) gives
Corollary~\ref{cor:unconditional-baseline}\textup{(i)} for the full
range \(0<\alpha<1\).

\subsection{A weaker lower bound holds at \texorpdfstring{$\theta=\frac{1}{3}$}{theta = 1/3}}
\label{subsec:discussion-theta-third}

The second-moment argument above applies to every fixed support density
strictly less than \(1\). In the range \(0<\alpha<\frac12\), a cubic
argument gives the stronger lower bound at scale \(q^{-\frac13}\).  We
first record the three auxiliary facts that will be used.

Let
\[
    \mathcal K:=\bigl(K(st)\bigr)_{s,t\in G},
\]
let \(\1\) denote the all-one column vector indexed by \(G\), and put
\(J:=\1 \, \1^{\mathsf T}\).

\begin{lemma}[Lemma~2.2, \cite{ZhangLP}]
\label{lem:discussion-K-square}
One has
\begin{equation*}
    \mathcal K\1=\1,
    \qquad
    \mathcal K^2=q^2I-(q+1)J.
\end{equation*}
\end{lemma}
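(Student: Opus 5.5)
Both identities are entrywise computations with Kloosterman sums over \(G=\F_q^\times\), using only orthogonality of additive characters. Everything needed is already in the section: the value of \(\sum_{a\in G}K(a)\) and the autocorrelation identity \eqref{eq:K-autocorrelation-discussion}.

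\textbf{First identity.} The \(t\)-th entry of \(\mathcal K\1\) is \(\sum_{s\in G}K(st)\). The substitution \(a=st\) permutes \(G\), so this equals \(\sum_{a\in G}K(a)\). The proof of Proposition~\ref{prop:discussion-full-support} shows that this sum is \((-1)(-1)=1\): expand \(K\), and the inner sum \(\sum_{a}\chi(a/r)\) equals \(-1\). Equivalently, this is the identity \(\sum_{s}K(sv)=1\) already verified in Section~\ref{sec3}. Hence \(\mathcal K\1=\1\).

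\textbf{Second identity.} The matrix \(\mathcal K\) is symmetric, since its \((s,t)\) entry depends only on the product \(st\). Therefore the \((t,u)\) entry of \(\mathcal K^2\) is
\[
\sum_{s\in G}K(ts)K(su).
\]
This is exactly the left-hand side of \eqref{eq:K-autocorrelation-discussion}. It equals \(q^2-q-1\) when \(t=u\) and \(-q-1\) when \(t\ne u\).

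The matrix \(q^2I-(q+1)J\) has diagonal entries \(q^2-q-1\) and off-diagonal entries \(-(q+1)\). These agree with the entries above, so \(\mathcal K^2=q^2I-(q+1)J\).

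\textbf{Checking the autocorrelation identity.} Since the second identity rests on \eqref{eq:K-autocorrelation-discussion}, I would re-derive it briefly. Expand both Kloosterman sums with variables \(x,y\in G\) and sum over \(s\) first:
\[
\sum_{s\in G}\chi\bigl(s(t/x+u/y)\bigr)=q\,\1_{t/x+u/y=0}-1.
\]
The correction term \(-1\) produces
\[
-\sum_{x}\chi(x)\sum_{y}\chi(y)=-1.
\]
The main term forces \(y=-ux/t\), leaving
\[
q\sum_{x\in G}\chi\bigl(x(1-u/t)\bigr).
\]
This inner sum is \(q-1\) if \(u=t\) and \(-1\) otherwise. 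The two cases therefore give \(q(q-1)-1=q^2-q-1\) and \(-q-1\), as claimed.

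No step is a real obstacle; the only care needed is tracking the \(-1\) correction terms coming from excluding \(0\) in the character sums.
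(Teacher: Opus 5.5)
Your proof is correct and follows essentially the paper's route. The paper cites Zhang for this lemma, but its text already contains both entrywise facts you use: $\sum_{s\in G}K(sv)=1$ (verified in Section~\ref{sec3}) gives $\mathcal K\1=\1$, and the autocorrelation identity \eqref{eq:K-autocorrelation-discussion} is exactly the $(t,u)$ entry of $\mathcal K^2=q^2I-(q+1)J$ once you note that $\mathcal K$ is symmetric. Your re-derivation of \eqref{eq:K-autocorrelation-discussion}, summing over $s$ first and tracking the $-1$ corrections, matches the paper's sketch after its substitution $a=st$, $v=u/t$, and the values $q^2-q-1$ and $-q-1$ are right.
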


\begin{lemma}[Lemma~2.3, \cite{ZhangLP}]
\label{lem:discussion-cubic-correlation}
For every \(t_1,t_2,t_3\in G\),
\begin{equation*}
    \left|
        \sum_{s\in G}K(st_1)K(st_2)K(st_3)
    \right|
    \le (q+1)^2.
\end{equation*}
\end{lemma}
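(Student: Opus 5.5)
The plan is to prove the bound by expanding the three Kloosterman sums, collapsing the sum over \(s\) with orthogonality, and reducing what remains to counting roots of a quadratic. The only \(s\)-dependence will be a single linear character.

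First, write each \(K(st_i)\) using the substitution \(r_i\mapsto st_i/r_i\), so that \(K(st_i)=\sum_{r_i\in G}\chi(r_i+st_i/r_i)\). This gives
\[
    \sum_{s\in G}K(st_1)K(st_2)K(st_3)
    =\sum_{r_1,r_2,r_3\in G}\chi(r_1+r_2+r_3)
    \sum_{s\in G}\chi\Bigl(s\Bigl(\tfrac{t_1}{r_1}+\tfrac{t_2}{r_2}+\tfrac{t_3}{r_3}\Bigr)\Bigr).
\]
The inner sum equals \(q\,\1_{\{\sum t_i/r_i=0\}}-1\). Since \(\sum_{r\in G}\chi(r)=-1\), the subtracted part contributes \(-(-1)^3=+1\). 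Hence the whole expression equals \(qS+1\), where
\[
    S:=\sum_{\substack{r_1,r_2,r_3\in G\\ t_1/r_1+t_2/r_2+t_3/r_3=0}}\chi(r_1+r_2+r_3).
\]

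Next, evaluate \(S\) almost exactly. Change variables to \(x_i=t_i/r_i\in G\), so the constraint becomes \(x_1+x_2+x_3=0\) and the phase is \(\chi(t_1/x_1+t_2/x_2+t_3/x_3)\). Parametrize the solutions homogeneously as \(x=\mu(1,u,-(1+u))\) with \(\mu\in G\) and \(u\in\F_q\setminus\{0,-1\}\); this is a bijection onto the admissible triples. The phase then becomes \(\chi(c(u)/\mu)\), where
\[
    c(u)=t_1+\frac{t_2}{u}-\frac{t_3}{1+u}.
\]
Summing over \(\mu\in G\) gives \(q-1\) if \(c(u)=0\) and \(-1\) otherwise. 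Therefore
\[
    S=q\cdot\#\{u\ne0,-1:\ c(u)=0\}-(q-2).
\]

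The key point is that the condition \(c(u)=0\) with \(u\ne0,-1\) is equivalent to
\[
    t_1u(1+u)+t_2(1+u)-t_3u=0,
\]
a quadratic in \(u\) with leading coefficient \(t_1\ne0\). It therefore has at most two roots. Consequently \(-(q-2)\le S\le q+2\), and so
\[
    -q^2+2q+1\le qS+1\le q^2+2q+1=(q+1)^2 .
\]
Both ends are at most \((q+1)^2\) in absolute value, which proves the lemma.

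The main delicacy is bookkeeping rather than a real obstacle. One must check that the parametrization by \((\mu,u)\) is bijective onto admissible triples, that the excluded values \(u=0,-1\) are exactly those where some \(x_i\) vanishes, and that the constant terms from the sums \(\sum\chi(r)=-1\) are tracked with the correct signs.
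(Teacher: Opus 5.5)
The paper does not prove this lemma; it imports it as Lemma~2.3 of \cite{ZhangLP}, so there is no internal argument to compare yours against. Your proposal is correct and complete as a self-contained proof.

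The individual steps check out:
\begin{itemize}
\item Opening the three sums and summing over $s$ gives exactly $qS+1$.
\item The substitution $x_i=t_i/r_i$ followed by the parametrization $x=\mu(1,u,-(1+u))$ is a bijection onto the admissible triples.
\item The sum over $\mu$ reduces $S$ to the number $n$ of distinct roots in $\F_q$ of $t_1u^2+(t_1+t_2-t_3)u+t_2$.
\end{itemize}
The exclusion $u\ne0,-1$ costs nothing. This polynomial takes the nonzero values $t_2$ and $t_3$ at $u=0$ and $u=-1$, so every root is admissible. You have therefore proved the exact identity
\[
    \sum_{s\in G}K(st_1)K(st_2)K(st_3)=(n-1)q^2+2q+1,\qquad n\in\{0,1,2\}.
\]

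This is slightly more than the paper needs. It shows the bound $(q+1)^2$ is attained precisely when the quadratic has two distinct roots in $\F_q$. It also gives the sharper lower bound $-(q^2-2q-1)$. For $t_1=t_2=t_3$ it recovers the classical evaluation of the third moment $\sum_{s}K(s)^3$, which is a good sanity check. Proposition~\ref{prop:discussion-theta-third} uses the lemma only in absolute value, so your two-sided bound is exactly what is required there.
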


\begin{lemma}[One-sided form of Lemma~2.5, \cite{ZhangLP}]
\label{lem:discussion-centered-cubic}
Let \(0<\alpha<\frac12\), let \(\Omega\) be a finite set, and let
\(X:\Omega\to\mathbb R\) satisfy
\[
    \frac1{|\Omega|}\sum_{\omega\in\Omega}X(\omega)=0.
\]
Suppose that \(A\ge0\), that \(X(\omega)\ge-A\) for every
\(\omega\in\Omega\), and that \(X(\omega)=-A\) on at least
\((1-\alpha)|\Omega|\) points. Then
\begin{equation*}
    \frac1{|\Omega|}\sum_{\omega\in\Omega}X(\omega)^3
    \ge
    A^3\frac{(1-\alpha)(1-2\alpha)}{\alpha^2}.
\end{equation*}
\end{lemma}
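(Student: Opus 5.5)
The approach is to split \(\Omega\) into the set where \(X\) attains its minimum \(-A\) and its complement, and to bound the cubic contribution of the complement by a tangent-line (Jensen-type) inequality for \(x\mapsto x^3\) that is valid on the half-line \(x\ge -2m\).

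First, set notation and handle the degenerate case. Let
\[
B:=\{\omega\in\Omega:X(\omega)=-A\},\qquad C:=\Omega\setminus B,
\]
and write \(\beta:=|B|/|\Omega|\ge 1-\alpha>\tfrac12\) and \(\gamma:=|C|/|\Omega|=1-\beta\le\alpha<\tfrac12\). If \(\gamma=0\), then \(X\equiv -A\), and the mean-zero hypothesis forces \(A=0\); both sides of the inequality then vanish. So assume \(\gamma>0\).

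Next, use the mean-zero condition to fix the mass on \(C\):
\[
\frac1{|\Omega|}\sum_{\omega\in C}X(\omega)=A\beta .
\]
Hence the average of \(X\) over \(C\) is \(m:=A\beta/\gamma\). Since \(\beta>\tfrac12>\gamma\), we have \(m\ge A/2\), that is, \(-A\ge -2m\).

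The key step is the pointwise tangent inequality
\[
x^3-3m^2x+2m^3=(x-m)^2(x+2m)\ge 0\qquad(x\ge -2m).
\]
It applies to every value \(X(\omega)\), \(\omega\in C\), because \(X\ge -A\ge -2m\). Summing over \(C\) and using the mass identity gives
\[
\frac1{|\Omega|}\sum_{C}X^3\ \ge\ 3m^2A\beta-2m^3\gamma\ =\ m^2A\beta\ =\ \frac{A^3\beta^3}{\gamma^2}.
\]
The points of \(B\) contribute exactly \(-A^3\beta\). Adding the two contributions and using \(\beta+\gamma=1\),
\[
\frac1{|\Omega|}\sum_{\Omega}X^3\ \ge\ A^3\beta\,\frac{\beta^2-\gamma^2}{\gamma^2}
\ =\ A^3\,\frac{(1-\gamma)(1-2\gamma)}{\gamma^2}.
\]

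Finally, pass from \(\gamma\) to \(\alpha\). Write
\[
g(\gamma):=\frac{(1-\gamma)(1-2\gamma)}{\gamma^2}=\Bigl(\frac1\gamma-1\Bigr)\Bigl(\frac1\gamma-2\Bigr).
\]
On \((0,\tfrac12)\) this is a product of two positive decreasing functions, so \(g\) is decreasing there. Since \(0<\gamma\le\alpha<\tfrac12\), we get \(g(\gamma)\ge g(\alpha)\), which is the claimed bound.

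The only delicate point is choosing the tangent point \(m\) so that the cubic is dominated by its tangent line on the whole range of \(X\) over \(C\). The factorization above shows this holds exactly when \(-A\ge -2m\), i.e. \(m\ge A/2\). This is where the hypothesis \(\alpha<\tfrac12\) enters, through \(\beta/\gamma\ge \tfrac12\).
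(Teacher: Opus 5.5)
Your proof is correct and complete. The paper gives no proof of this lemma; it imports it from \cite{ZhangLP} as a one-sided form of Zhang's Lemma~2.5. So there is no in-paper argument to compare against, and your tangent-line argument supplies a self-contained proof.

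The steps all check:
\begin{itemize}
\item the mass identity $\frac1{|\Omega|}\sum_{C}X=A\beta$;
\item the factorization $x^3-3m^2x+2m^3=(x-m)^2(x+2m)$;
\item the simplification $3m^2A\beta-2m^3\gamma=m^2A\beta$, using $m\gamma=A\beta$;
\item the final identity $A^3\beta(\beta^2-\gamma^2)/\gamma^2=A^3(1-\gamma)(1-2\gamma)/\gamma^2$;
\item the monotonicity of $g$ on $(0,\frac12)$.
\end{itemize}
The degenerate case $\gamma=0$ is also handled correctly. The bound is sharp: equality holds when $X\equiv m$ on $C$ and $\gamma=\alpha$, which is possible whenever $\alpha|\Omega|$ is an integer.

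One correction to your closing commentary. The tangent step does not really depend on $\alpha<\frac12$. It needs only $m\ge A/2$, that is, $\beta/\gamma\ge\frac12$, which holds as soon as $\gamma\le\frac23$. Under your hypotheses you in fact have $m\ge A$, because $\beta>\gamma$.

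The place where $\alpha<\frac12$ genuinely matters is the last step. There it makes both factors $\frac1\gamma-1$ and $\frac1\gamma-2$ positive, and it makes $g(\alpha)=d_\alpha>0$. This matches the role the paper assigns to $d_\alpha$ in the cubic argument.
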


\begin{proposition}
\label{prop:discussion-theta-third}
Fix \(0<\alpha<\frac12\), and set
\[
    d_\alpha:=\frac{(1-\alpha)(1-2\alpha)}{\alpha^2}>0.
\]
Then, for every odd prime power \(q\), every nonempty \(T\subseteq G\)
with \(|T|\le\alpha(q-1)\), and every \(\lambda\in\Prob(T)\), one has
\begin{equation}
\label{eq:discussion-theta-third-exact}
    \Phi_T(\lambda)
    \ge
    d_\alpha^{\frac13}
    \left(\frac{q+1}{(q-1)^2}\right)^{\frac13}
    \ge
    d_\alpha^{\frac13}q^{-\frac13}.
\end{equation}
Thus, the weakened form of the conjecture holds with
\(\theta=\frac13\).
\end{proposition}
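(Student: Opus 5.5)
The plan is to dualize through the identity $\mathcal K^2=q^2I-(q+1)J$ of Lemma~\ref{lem:discussion-K-square}. This converts the column combination $R_\lambda=\mathcal K\lambda$ back into a function of $t$ that is essentially $q^2\lambda_t$. That function takes one constant value off $T$, so the one-sided cubic inequality of Lemma~\ref{lem:discussion-centered-cubic} applies to it. The resulting cubic sum is then bounded above by the cubic correlation bound of Lemma~\ref{lem:discussion-cubic-correlation}, in terms of $\sum_s|R_\lambda(s)|$.

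\emph{Step 1 (inversion and centering).} Regard $\lambda$ as a vector on $G$, so $R_\lambda=\mathcal K\lambda$. Since $\mathcal K$ is symmetric and $\1^{\mathsf T}\lambda=1$, we get
\[
\mathcal K R_\lambda=\mathcal K^2\lambda=q^2\lambda-(q+1)\1 .
\]
Using $\mathcal K\1=\1$, put $Y:=R_\lambda-\tfrac1N\1$ and $X:=\mathcal K Y$, so that
\[
X(t)=q^2\lambda_t-(q+1)-\tfrac1N=q^2\lambda_t-\tfrac{q^2}{N},
\]
where the last equality uses $q+1+\tfrac1N=\tfrac{q^2}{N}$. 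Hence $X$ has mean zero over $\Omega=G$. It satisfies $X\ge -A$ with $A=q^2/N$, and $X=-A$ on $G\setminus T$, which contains at least $(1-\alpha)N$ points. Lemma~\ref{lem:discussion-centered-cubic} then gives
\[
\sum_{t\in G}X(t)^3\ge N d_\alpha\frac{q^6}{N^3}=d_\alpha\frac{q^6}{N^2}.
\]

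\emph{Step 2 (upper bound).} Expand the cube:
\[
\sum_t X(t)^3=\sum_{s_1,s_2,s_3}Y(s_1)Y(s_2)Y(s_3)\sum_t K(s_1t)K(s_2t)K(s_3t).
\]
By Lemma~\ref{lem:discussion-cubic-correlation}, with the roles of $s$ and $t$ exchanged (the lemma is symmetric in them), each inner sum is at most $(q+1)^2$ in absolute value. Therefore
\[
\sum_t X(t)^3\le (q+1)^2\Bigl(\sum_s|Y(s)|\Bigr)^3 .
\]
Combining with Step 1 yields
\[
\sum_s|Y(s)|\ge d_\alpha^{1/3}\,\frac{q^2}{N^{2/3}(q+1)^{2/3}} .
\]

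\emph{Step 3 (returning from $Y$ to $R_\lambda$).} I expect this to be the main obstacle, since it is where the exact constant in \eqref{eq:discussion-theta-third-exact} is decided. The triangle inequality gives $\sum_s|R_\lambda(s)|\ge\sum_s|Y(s)|-1$. Dividing by $N$ then gives roughly $d_\alpha^{1/3}q^{4/3}N^{-5/3}(q+1)^{-2/3}-1/N$, which is of order $q^{-1/3}$ but does not obviously reproduce the stated constant $\bigl((q+1)/N^2\bigr)^{1/3}$.

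To recover the exact form, I would first try to avoid the shift altogether. One option is to apply the cubic argument to $R_\lambda$ directly, with $\sum_t(\mathcal K R_\lambda)(t)^3$ and centering done on the $t$-side only. Another is to choose the normalisation $X=\mathcal K R_\lambda+\text{const}$ so that the constant is absorbed using $\mathcal K\1=\1$ and $\sum_s R_\lambda(s)=1$. Then I would recompute carefully, checking whether $q^6/\bigl(N^2(q+1)^2\bigr)$ can be compared with $N(q+1)\cdot N^{-3}\cdot N^3$ via $q^2\ge (q-1)(q+1)$.

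If the exact constant resists, the weaker conclusion $\Phi_T(\lambda)\gg_\alpha q^{-1/3}$ still follows for large $q$. Small $q$ can be handled by adjusting constants, and that weaker form is all that Proposition~\ref{prop:quantitative-transfer} needs for Corollary~\ref{cor:unconditional-baseline}(ii).
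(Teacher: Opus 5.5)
Your Steps 1 and 2 are correct and follow the paper's route: invert via $\mathcal K^2=q^2I-(q+1)J$, apply the one-sided cubic lemma on the $t$-side, and bound the cubic sum by $(q+1)^2\|\cdot\|_1^3$. There is, however, a genuine gap at Step 3. As written, your argument only gives
\[
\Phi_T(\lambda)\ \ge\ \frac{q^2}{q^2-1}\,d_\alpha^{1/3}\Bigl(\frac{q+1}{(q-1)^2}\Bigr)^{1/3}-\frac{1}{q-1}.
\]
The main term is $d_\alpha^{1/3}q^{2}N^{-5/3}(q+1)^{-2/3}$; your $q^{4/3}$ is a slip and would be of order $q^{-1}$. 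The factor $q^2/(q^2-1)$ exceeds $1$ only by $1/(q^2-1)$. That is far too little to absorb the loss $1/(q-1)$ from the triangle inequality. So the stated inequality, with the explicit constant $d_\alpha^{1/3}$ and for every odd prime power $q$, is not reached. Your fallback ($\gg_\alpha q^{-1/3}$ for large $q$ after adjusting constants) is a strictly weaker statement. It would suffice for Corollary~\ref{cor:unconditional-baseline}(ii), but it is not this proposition.

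The missing observation is that the shift belongs on the $t$-side, where it helps rather than hurts. Take your own mean-zero vector $X=q^2\lambda-\frac{q^2}{N}\1$. Then $\mathcal K R_\lambda=X+\frac1N\1$. For any $c>0$ and $\sum_t X(t)=0$,
\[
\sum_t\bigl(X(t)+c\bigr)^3=\sum_t X(t)^3+3c\sum_t X(t)^2+Nc^3\ \ge\ \sum_t X(t)^3,
\]
because the term $3c^2\sum_t X(t)$ vanishes. Hence $\sum_t(\mathcal K R_\lambda)(t)^3\ge d_\alpha q^6/N^2$.

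Now run your Step 2 on $R_\lambda$ itself, with no $Y$. With $S=\sum_s|R_\lambda(s)|$, this gives $(q+1)^2S^3\ge d_\alpha q^6/N^2\ge d_\alpha(q+1)^3N$, using $q^6\ge(q^2-1)^3=(q+1)^3N^3$. Thus $S^3\ge d_\alpha(q+1)N$, which is exactly the first inequality after dividing by $N$. The second inequality is just $q(q+1)\ge(q-1)^2$.

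This is the paper's proof. There it is written as $(\mathcal K R)(t)=(q+1)(u_t-1)$, where $u-1$ is a mean-zero vector plus the positive constant $\rho-1=1/(q^2-1)$, and the factor $\rho^3\ge1$ from the cubic lemma is simply dropped.
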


\begin{proof}
Extend \(\lambda\) by zero to \(G\), regard it as a column vector, and
write
\[
    R:=\mathcal K\lambda,
    \qquad
    S:=\norm{R}_1=\sum_{s\in G}|R(s)|.
\]
Thus \(R(s)=R_\lambda(s)\) and \(\Phi_T(\lambda)=\frac{S}{N}\).  Since
\(J\lambda=\1\), Lemma~\ref{lem:discussion-K-square} gives
\begin{equation}
\label{eq:discussion-KR-exact}
    \mathcal K R
    =\mathcal K^2\lambda
    =q^2\lambda-(q+1)\1.
\end{equation}
Define
\[
    a:=\frac{q^2}{q+1},
    \qquad
    u_t:=a\lambda_t,
    \qquad
    \rho:=\frac{a}{N}=\frac{q^2}{q^2-1}>1.
\]
Then
\begin{equation}
\label{eq:discussion-KR-u}
    (\mathcal K R)(t)=(q+1)(u_t-1),
    \qquad
    \frac1N\sum_{t\in G}u_t=\rho.
\end{equation}
Put \(X_t:=u_t-\rho\). Then
\[
    \frac1N\sum_{t\in G}X_t=0,
    \qquad
    X_t\ge-\rho\quad(t\in G).
\]
Moreover, \(X_t=-\rho\) whenever
\(t\notin\operatorname{supp}\lambda\). It follows from
\[
    |G\setminus\operatorname{supp}\lambda|
    \ge N-|T|
    \ge(1-\alpha)N,
\]
and Lemma~\ref{lem:discussion-centered-cubic} with
\(\Omega=G\) and \(A=\rho\) that
\begin{equation*}
    \sum_{t\in G}X_t^3
    \ge N\rho^3d_\alpha
    \ge Nd_\alpha.
\end{equation*}
Because \(\sum\limits_{t\in G}X_t=0\) and \(\rho-1>0\), we have
\begin{align*}
    \sum_{t\in G}(u_t-1)^3
    &=\sum_{t\in G}\bigl(X_t+(\rho-1)\bigr)^3 \notag\\
    &=\sum_{t\in G}X_t^3
      +3(\rho-1)\sum_{t\in G}X_t^2
      +N(\rho-1)^3 \notag\\
    &\ge Nd_\alpha.
\end{align*}
It follows from \eqref{eq:discussion-KR-u} that
\begin{equation}
\label{eq:discussion-KR-cubic-lower}
    \sum_{t\in G}\bigl((\mathcal K R)(t)\bigr)^3
    \ge(q+1)^3Nd_\alpha.
\end{equation}
On the other hand, expanding the cubic and applying
Lemma~\ref{lem:discussion-cubic-correlation}, we obtain
\begin{align}
\nonumber
    \left|\sum_{t\in G}\bigl((\mathcal K R)(t)\bigr)^3\right|
    &\le
      \sum_{s_1,s_2,s_3\in G}
      |R(s_1)R(s_2)R(s_3)| \bigg( 
      \bigg|\sum_{t\in G}
      K(ts_1)K(ts_2)K(ts_3)\bigg| \bigg) \\ \nonumber
    &\le(q+1)^2
      \sum_{s_1,s_2,s_3\in G}
      |R(s_1)R(s_2)R(s_3)| \\
    &=(q+1)^2S^3.
\label{eq:discussion-KR-cubic-upper}
\end{align}
Combining \eqref{eq:discussion-KR-cubic-lower} and
\eqref{eq:discussion-KR-cubic-upper}, we conclude that
\[
    S^3\ge(q+1)Nd_\alpha=(q^2-1)d_\alpha.
\]
Consequently,
\[
    \Phi_T(\lambda)=\frac SN
    \ge
    d_\alpha^{\frac13}
    \frac{(q^2-1)^{\frac13}}{q-1}
    =d_\alpha^{\frac13}
     \left(\frac{q+1}{(q-1)^2}\right)^{\frac13} \geq d_\alpha^{\frac13}
     q^{-\frac13}.
\]
This proves
\eqref{eq:discussion-theta-third-exact}.
\end{proof}

The positivity of \(d_\alpha\) is exactly where the restriction
\(\alpha<\frac12\) enters this proof. No
\(q^{-\frac13}\)-level conclusion is asserted here for
\(\frac12\le\alpha<1\). Combining the proposition with
Proposition~\ref{prop:quantitative-transfer} gives
Corollary~\ref{cor:unconditional-baseline}\textup{(ii)}.

\begin{proof}[Proof of Corollary~\ref{cor:unconditional-baseline}]
We obtain parts \textup{(i)} and \textup{(ii)} by applying Proposition~\ref{prop:quantitative-transfer} with $\theta=\frac12$ and $c_{\alpha, \frac12}=\frac{1-\alpha}{2\alpha}$ via Proposition~\ref{prop:discussion-theta-half}, and with $\theta=\frac13$ and $c_{\alpha, \frac13}=d_\alpha^{\frac13}$ via Proposition~\ref{prop:discussion-theta-third}, respectively
\end{proof}

\end{document}